\let\phi\varphi
\let\epsilon\varepsilon
\let\setminus\smallsetminus
\let\emptyset\varnothing
\let\leq\leqslant
\let\geq\geqslant
\newtheorem{Thm}{Theorem}[section]
\newtheorem{Prop}[Thm]{Proposition}
\newtheorem{Lem}[Thm]{Lemma}
\numberwithin{equation}{section}
\def\qed{{\hskip0pt\unskip\unskip\nobreak\hfil\penalty50
          \hskip1em\hbox{}\nobreak\hfil
           {$\square$}
          \parfillskip=0pt\finalhyphendemerits=0
          \par}\medskip}
\begin{document}


\title{On the continued fraction expansion of absolutely normal numbers}

\author[A.-M. Scheerer]{Adrian-Maria Scheerer}
\address[A.-M. Scheerer]{Institute of Analysis and  Number
  Theory\\Graz University of Technology\\A-8010 Graz, Austria}
\email{scheerer@math.tugraz.at}

%

\begin{abstract}
We construct an absolutely normal number whose continued fraction expansion is normal in the sense that it contains all finite patterns of partial quotients with the expected asymptotic frequency as given by the Gauss-Kuzmin measure. The construction is based on ideas of Sierpinski and uses a large deviations theorem for sums of mixing random variables. 
\end{abstract}

\date{\today}

\maketitle


\section{Introduction}

Let $x$ be a real number in the unit interval $[0,1)$ and let $b\geq 2$ be a positive integer. Consider the maps $T_b : [0,1) \to [0,1), x \mapsto bx \bmod 1$ and the Gauss map $T_G:[0,1) \to [0,1)$ defined by $T_G(x) = \frac{1}{x} \bmod 1$ if $x>0$ and $T_G(0)=0$. Then $x$ is called \emph{normal to base $b$}, if for all real numbers $0\leq \alpha < \beta < 1$,
\begin{equation}\label{base_b_norm_def}
\frac{1}{n}\sum_{i= 0}^{n-1} \chi_{[\alpha,\beta)}(T_b^i(x)) \rightarrow \beta - \alpha
\end{equation}
holds, as $n$ tends to infinity. Here, $\chi_A$ is the characteristic function of the set $A$. $x$ is called \emph{continued fraction normal}, if for all $0\leq \alpha < \beta < 1$,
\begin{equation}\label{cf_norm_def}
\frac{1}{n}\sum_{i= 0}^{n-1} \chi_{[\alpha,\beta)}(T_G^i(x)) \rightarrow \mu_G([\alpha,\beta)),
\end{equation}
where $\mu_G$ is the Gauss-Kuzmin measure on $[0,1)$ given by
\begin{equation}\label{mu_G}
\mu_G(A) = \frac{1}{\log 2} \int_A \frac{1}{1+x}dx
\end{equation}
for any Borel set $A$. 

It is in fact enough to consider in definitions~\eqref{base_b_norm_def} and~\eqref{cf_norm_def} so-called cylinder sets. These are intervals all of whose elements share the same beginning in their base-$b$ expansion or continued fraction expansion, respectively. This way we recover the more familiar definition of normality via the expected behaviour of the asymptotic frequencies of all finite digit patterns.

The maps $T_b$ are invariant and ergodic with respect to the Lebesgue measure and the Gauss map $T_G$ is invariant and ergodic with respect to $\mu_G$. An application of the point-wise ergodic theorem thus shows that with respect to Lebesgue measure almost all real numbers in the unit-interval are simultaneously normal to all integer bases $b\geq 2$ (such numbers are called \emph{absolutely normal}) and continued fraction normal. The aim of this note is to exhibit an example of such a number by means of describing its binary expansion one digit after the other using a recursive construction.

Our construction is based on ideas of Sierpinski~\cite{sierpinski1917:borel_elementaire} and Becher and Figueira \cite{becher_figueira:2002} and can be described as follows. We consider a suitable large subset $\Omega$ of $[0,1)$ as our ambient set. This set contains all real numbers whose partial quotients grow at a controlled rate (see Section~\ref{Sec_Omega}). We wish to exclude from this set the set of all non-normal numbers and do so by collecting these numbers in a set $E$. This set will in fact have positive but small measure. Part of the proof is showing that this set is in fact `small'. The corresponding calculations are carried out in Sections~\ref{Sec_large_dev} and~\ref{Sec_set_E}. The main new ingredient is the use of a large deviations theorem for sums of mixing random variables to control deviations in~\eqref{cf_norm_def}. In Section~\ref{Sec_algorithm} we compute the binary expansion of a number $\nu$ in $\Omega\setminus E$. This is done starting with the interval $[0,1)$ and subsequently considering recursively both halves of the preceding interval and deciding which half is `best', i.e. contains more of $\Omega\setminus E$. To make this construction computable, we actually work with finitary versions of $\Omega$ and $E$ at the cost of a small but controllable error. Finally, in Theorem~\ref{BIGTHEOREM} we show that $\nu$ is computable and indeed simultaneously normal to every integer base $b\geq 2$, as well as continued fraction normal. Section~\ref{sec_set_theoretic_lemmas} contains some ancillary set-theoretic lemmas used in Section~\ref{Sec_algorithm}.

Normal numbers originated in the work of Borel from 1909~\cite{borel1909}. The reader is best advised to consult the books~\cite{kuipers_niederreiter,drmota_tichy,bugeaud2012distribution} for an introduction and concise treatment of the subject.


Although there exist many constructions of normal numbers (to a single base), no easy construction of a number normal to two multiplicatively independent bases is known. However, recently constructions of absolutely normal numbers via recursively formulated algorithms have received much interest. If `easy' is interpreted from a computational viewpoint, the problem has been solved by Becher, Heiber and Slaman~\cite{becher_heiber_slaman2013:polynomial_time_algorithm}, who gave a polynomial time algorithm for computing the digits (to some base) of an absolutely normal number, and very recently by Lutz and Mayordomo~\cite{lutz_mayordomo_nearly_linear_time}, who gave a nearly linear time algorithm. Other polynomial time algorithms have been announced in~\cite{figueira_nies} and~\cite{mayordomo}. Further constructions of absolutely normal numbers include works by Lebesgue~\cite{MR1504765}, Turing~\cite{turing1992:collected_works} (see also~\cite{becher_figueira_picchi2007:turing_unpublished}), Schmidt~\cite{schmidt1961:uber_die_normalitat} (see also~\cite{scheerer2015:schmidt}) and Levin~\cite{levin1979:absolutely_normal}  (see also~\cite{alvarez_becher2015:levin}).

Explicit examples of continued fraction normal numbers have been given by Postnikov and Pyatecki{\u\i}~\cite{MR0101857}, Adler, Keane and Smorodinsky~\cite{ADLER198195},  Madritsch and Mance~\cite{Madritsch2016} and Vandehey~\cite{Vandehey2016424} by concatenating suitable strings of partial quotients. It remained an open problem to construct an absolutely normal number that is continued fraction normal (see \cite[Ch. 10]{bugeaud2012distribution} and~\cite{queffelec}).

Since our approach is based on the construction of Sierpinski \cite{sierpinski1917:borel_elementaire} and of Becher and Figueira \cite{becher_figueira:2002}, it is expected to have double exponential complexity (see \cite{scheerer2015:schmidt}). In view of the above mentioned much faster algorithms, we have thus refrained from analyzing its complexity.

\medskip
%

We call a real number \emph{computable}, if its binary expansion is computable in a naive sense; i.e. if there is a deterministic algorithm, only using addition, subtraction, multiplication, division and comparison, that outputs the binary expansion of this number one digit after the other, requiring to carry out only finitely many operations for each digit.

\section{Large Deviation Estimates}
\label{Sec_large_dev}

\subsection{Non-normal numbers for integer bases}

Let $b\geq 2$ be an integer. A word $\bm\omega = \omega_1 \ldots \omega_n$ of $n$ digits $0 \leq \omega_i \leq b-1$, $1 \leq i \leq n$, is called \emph{$(\epsilon,1)$-normal of length $n$}, or $\epsilon$-simply normal, if for each digit $0 \leq d \leq b-1$, 
$$n \frac{1}{b}(1-\epsilon) < N(d,\bm\omega) < n \frac{1}{b}(1+\epsilon),$$ where $N(d,\bm\omega)$ is the number of $i$, $1 \leq i \leq n$, such that $\omega_i = d$. Let $E_b(\epsilon, n)$ be the set of all real numbers $x \in [0,1)$ such that the first $n$ digits of the base-$b$ expansion of $x$ form an $(\epsilon,1)$-normal word of length $n$. Denote the complement of $E_b(\epsilon,n)$ in $[0,1)$ by $E_b^c(\epsilon,n)$.

Fix a digit $d$, $0 \leq d \leq b-1$, and consider the random variables $X_i : [0,1) \rightarrow \mathbb{R}$, for \mbox{$1\leq i \leq n$}, defined by $X_i(x) = 1$ if the $i$-th digit in the $b$-ary expansion of $x$ equals $d$, and $X_i(x) = 0$ otherwise. The $X_i$ are independent and have expectation $\frac{1}{b}$. Let $S_n = X_1 + \ldots + X_n$. Then Hoeffding's inequality for the sum of $n$ i.i.d. random variables bounded by $0$ and $1$ yields
\begin{equation*}
\mathbb{P}\left(\left\vert \frac{S_n}{n} - \mathbb{E}\left(\frac{S_n}{n}\right)\right\vert \geq t\right) \leq 2 \exp(-2nt^2).
\end{equation*}
In our case, the probability measure is the Lebesgue measure $\lambda$ on the unit interval. With $t = \frac{\epsilon}{b}$ we obtain
\begin{equation*}
\lambda\left( \left\{ x \in [0,1) : \left\vert \frac{1}{n} \sharp \left\{1\leq i \leq n : X_i(x) = d \right\} - \frac{1}{b} \right\vert \geq \frac{\epsilon}{b}\right\}\right) \leq 2 \exp\left(-\frac{2 \epsilon^2}{b^2} n\right).
\end{equation*}
Hence, for the set of non-$(\epsilon,1)$-normal numbers of length $n$,
\begin{equation*}
\lambda\left(E^c_b(\epsilon,n)\right) \leq 2b \exp\left(-\frac{2 \epsilon^2}{b^2} n\right).
\end{equation*}


If one is interested in $(\epsilon,k)$-normality as introduced by Besicovitch~\cite{besicovitch1935:epsilon}, where one wants to control all combinations of $k$ digits, it is possible to obtain an analogous result by replacing Hoeffding's inequality with Theorem~5 in~\cite{siegel:towards_a_usable_theory_of_chernoff}.

\subsection{Non-normal numbers for continued fractions}

Any real number $x \in [0,1)$ has a continued fraction expansion, denoted as $x = [0; a_1(x), a_2(x), \ldots]$ where the $a_i(x)$ are positive integers. This expansion is finite if and only if $x$ is rational.  For $i \geq 1$, the $a_i(x)$ are obtained by $a_i(x) = \lfloor 1/ T_G^{i-1}(x) \rfloor$, unless $T_G^{i-1} (x)$ is non-zero. If $x$ is understood, we will simply write $a_i$.

Let $A$ be a Borel subset of $[0,1)$ and denote by $\mu_G(A)$ its Gauss measure as introduced in~\eqref{mu_G}. For the Lebesgue measure $\lambda$ we have
\begin{equation*}
\frac{1}{2 \log 2} \lambda(A) \leq \mu_G(A) \leq \frac{1}{\log 2} \lambda(A).
\end{equation*}
Note that for a positive integer $D$, $\lambda(\{x \in [0,1) : a_1(x) \leq D \}) = \frac{D}{D+1}$.

\medskip

Let $\epsilon > 0$ and let $k, D, n$ be positive integers. 
A word $\bm\omega = \omega_1 \ldots \omega_n$ of length $n$ of digits $\omega_i \in \{ 1, \ldots D\}$ will be called \emph{$(\epsilon, k, D, n)$-continued-fraction-normal}, if for all words $\bm d=d_1 \ldots d_k$ of length $k$ of digits $d_j \in \{1, \ldots, D\}$,
\begin{equation}\label{ineq-cf-norm}
(n-k+1) \mu_G(\Delta_{\bm d})(1 - \epsilon) < N({\bm d},\bm\omega) < (n-k+1) \mu_G(\Delta_{\bm d})(1 + \epsilon)
\end{equation}
holds, where $N({\bm d},\bm\omega)$ is the number of $i$, $1 \leq i \leq n-k+1$, such that $\omega_i \ldots \omega_{i+k-1} = d_1 \ldots d_k$ and where $\Delta_{\bm d}$ is the set of all real numbers in $[0,1)$ whose continued fraction expansion coincides on the first $k$ digits with ${\bm d}$.


The set of real numbers $x \in [0,1)$ whose first $n$ partial quotients form a word that is $(\epsilon, k, D, n)$-CF-normal will be denoted by $E_{\text{CF}}(\epsilon,k,D,n)$. We denote its complement in $[0,1)$ by $E^c_{\text{CF}}(\epsilon, k, D, n)$.
We also require a notation for the set of $x \in [0,1)$, where the number of occurrences of only one specific ${\bm d}$ of length $k$ of digits in $\{1, \ldots, D\}$ satisfies~\eqref{ineq-cf-norm}. This set will be denoted by $E_{\text{CF}}(\epsilon, {\bm d}, D, n)$.
Similarly, we introduce the sets $E_{\text{CF}}(\epsilon,k,n)$ and $E_{\text{CF}}(\epsilon,{\bm d},n)$ without restriction on the partial quotients. Complements will be relative to $[0,1)$.

\medskip

Fix a word ${\bm d}$ of length $k$ composed from positive integers. For $i \geq 1$ we have the random variables $a_i : [0,1) \rightarrow \mathbb{R}$ and derived random variables $X_i : [0,1) \rightarrow \mathbb{R}$. The $a_i$ are defined by $a_i(x) = a_i$ when the continued fraction expansion of $x$ is $x = [0; a_1, a_2, \ldots, a_i, \ldots]$. The $X_i$ are defined to be $1 - \mu_i$ if the string ${\bm d}$ appears in the continued fraction expansion of $x$ starting at $a_i$, and $ - \mu_i$ if not. 
The numbers $\mu_i$ are chosen such that $E[X_i] = 0$. 

A sequence $(X_i)_{i \geq 1}$ of random variables $X_i : [0,1) \rightarrow \mathbb{R}$ is called \emph{strongly mixing}, if 
\begin{equation}
\alpha(n) := \sup_{l \geq 1} \alpha(M_l, G_{l+n}) \rightarrow 0
\end{equation}
as $n \rightarrow \infty$. Here $M_l = \sigma(X_i, i\leq l)$ and $G_{l+n} = \sigma(X_i, i \geq l+n)$ are the $\sigma$-algebras generated by $X_i$, for $i\leq l,$ and by $X_i$, for $i \geq l+n$. The \emph{$\alpha$-mixing coefficients} $\alpha(M_l, G_{l+n})$ are defined as
\begin{equation*}
\alpha(M_l, G_{l+n}) = \sup_{A \in M_l, B \in G_{l+n}} \vert \mathbb{P}(A\cap B) - \mathbb{P}(A) \mathbb{P}(B) \vert.
\end{equation*}

For an overview of different notions of mixing, see the survey by Bradley~\cite{bradley}. We followed the notation from~\cite{merlevede_peligrad_rio}.

We know the following mixing property of $(a_i)_{i\geq 1}$ with respect to the Gauss map $\mu_G$ on $[0,1)$.

\begin{Thm}[Philipp~\cite{Philipp1988}]\label{mixing_philipp}
The $a_i$ are exponentially strongly mixing. In fact we have for some $0 \leq \rho < 0.8$
\begin{equation}
\vert \mu_G(A\cap B) - \mu_G(A)\mu_G(B) \vert \leq \rho^n \mu_G(A)\mu_G(B)
\end{equation} 
for all $A \in \sigma(a_i, i \leq l)$ and $B \in \sigma(a_i, i \geq n+l)$.
\end{Thm}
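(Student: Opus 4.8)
The plan is to reduce the statement to decay of correlations for the Gauss map and then to read off the explicit rate from the Gauss--Kuzmin--L\'evy theorem. Both $\sigma(a_i,i\le l)$ and $\sigma(a_i,i\ge n+l)$ are generated by cylinder sets, and the asserted bound is multiplicative in $\mu_G(A)$ and $\mu_G(B)$, hence additive over a partition of $A$; a routine summation over the rank-$l$ cylinders composing $A$ therefore lets me assume that $A=\Delta_{c_1\ldots c_l}$ is a single cylinder, while $B$ remains an arbitrary element of the tail algebra. Since every $a_i$ with $i\ge n+l$ is a function of $T_G^{\,n+l-1}(x)$, I may write $B=(T_G^{\,n+l-1})^{-1}(B')$ for some Borel set $B'\subseteq[0,1)$, and $\mu_G(B)=\mu_G(B')$ by invariance of $\mu_G$.

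First I would linearise $A$. On $A$ the iterate $T_G^{\,l}$ is a bijection onto $[0,1)$ whose inverse branch is $\psi(y)=\tfrac{p_l+p_{l-1}y}{q_l+q_{l-1}y}$, with $p_j/q_j$ the convergents of $c_1\ldots c_l$, and $T_G^{\,n+l-1}=T_G^{\,n-1}\circ T_G^{\,l}$. Substituting $x=\psi(y)$ in $\mu_G(A\cap B)=\tfrac{1}{\log2}\int_{A\cap B}\tfrac{dx}{1+x}$ gives
\begin{equation*}
\mu_G(A\cap B)=\int_{[0,1)}\chi_{B'}\!\bigl(T_G^{\,n-1}(y)\bigr)\,g_A(y)\,d\mu_G(y),\qquad g_A(y)=\frac{1+y}{(q_l+q_{l-1}y)\bigl[(p_l+q_l)+(p_{l-1}+q_{l-1})y\bigr]}.
\end{equation*}
Taking $B'=[0,1)$ shows $\int_{[0,1)}g_A\,d\mu_G=\mu_G(A)$, and the elementary bounds $q_{l-1}\le q_l$, $p_{l-1}\le p_l$, $0\le y<1$ show that $g_A$ has distortion controlled by an absolute constant; thus $g_A/\mu_G(A)$ stays in a fixed bounded set of functions of bounded variation, uniformly in $l$ and in the digits.

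Second, I would rewrite this through the transfer (Perron--Frobenius) operator $\mathcal{L}$ of $T_G$ relative to $\mu_G$, characterised by $\int (u\circ T_G^{\,m})v\,d\mu_G=\int u\,\mathcal{L}^{\,m}v\,d\mu_G$ and $\mathcal{L}\mathbf{1}=\mathbf{1}$, obtaining $\mu_G(A\cap B)=\int\chi_{B'}\,\mathcal{L}^{\,n-1}g_A\,d\mu_G$. The Gauss--Kuzmin--L\'evy theorem furnishes a spectral gap: there is $\rho<0.8$ and a constant $C$ with $\lVert\mathcal{L}^{\,m}h-(\int h\,d\mu_G)\mathbf{1}\rVert_\infty\le C\rho^{\,m}\lVert h\rVert$ for $h$ of bounded variation. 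Applied to $h=g_A$, together with $\int g_A\,d\mu_G=\mu_G(A)$ and $\lVert g_A\rVert\le C'\mu_G(A)$, this yields
\begin{equation*}
\bigl|\mu_G(A\cap B)-\mu_G(A)\mu_G(B)\bigr|\le \mu_G(B')\,\bigl\lVert\mathcal{L}^{\,n-1}g_A-\mu_G(A)\mathbf{1}\bigr\rVert_\infty\le CC'\rho^{\,n-1}\mu_G(A)\mu_G(B),
\end{equation*}
which is the claimed inequality after enlarging the base of the exponential slightly to absorb $CC'$ and the shift from $n-1$ to $n$ (harmless since the true rate is well below $0.8$).

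The hard part is quantitative rather than structural: upgrading the soft decay-of-correlations statement to the clean multiplicative bound with an \emph{explicit} $\rho<0.8$. This requires the sharp L\'evy--Kuzmin convergence rate for the Gauss--Kuzmin operator together with fully uniform control of the distortion constant $C'$ for $g_A$ across all ranks $l$ and all digit strings $c_1\ldots c_l$; ensuring these constants do not degrade as the cylinders shrink is exactly the delicate estimate carried out by Philipp.
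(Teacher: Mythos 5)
The paper does not prove this statement at all: Theorem~\ref{mixing_philipp} is imported as a black box from Philipp's paper (and the remark after it points to \cite{iosifescu_kraaikamp} and \cite{kessebohmer_schindler} for the constant), so there is no internal proof to compare against. Your sketch follows what is in fact the standard route in those references: reduce to a single rank-$l$ cylinder $A$ by additivity of the claimed bound, write $B$ as a preimage under $T_G^{\,n+l-1}$, change variables along the inverse branch to express $\mu_G(A\cap B)$ as $\int \chi_{B'}\,\mathcal{L}^{\,n-1}g_A\,d\mu_G$ with $g_A$ the (uniformly distorted) conditional density, and invoke the Gauss--Kuzmin--L\'evy spectral gap on functions of bounded variation. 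All of these structural steps are correct, including the formula for $g_A$ and the identity $\int g_A\,d\mu_G=\mu_G(A)$.

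The one place where the argument as written does not yet yield the stated theorem is the last step, ``enlarging the base of the exponential slightly to absorb $CC'$.'' The theorem is a $\psi$-mixing bound with an explicit $\rho<0.8$ valid for \emph{all} $n\ge 1$, and for small $n$ there is no trivial fallback: $\mu_G(A\cap B)$ can exceed $\mu_G(A)\mu_G(B)$ by an arbitrarily large factor when both measures are small, so one cannot wave away the prefactor. Closing this requires the actual numerical constants --- e.g.\ the bound of the form $\epsilon_2\lambda_0^{\,n-2}$ with $\epsilon_2=\pi^2\log 2/6\approx 0.14$ and $\lambda_0\approx 0.3037$ from Iosifescu--Kraaikamp, which one then checks is $\le 0.8^{\,n}$ already at $n=1$ --- together with the uniform distortion bound $\sup g_A/\inf g_A\le 2$ over all ranks and digit strings. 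You correctly identify this as the crux and defer it to Philipp; that is an honest and accurate description of where the real work lies, but it means your proposal reproduces the strategy rather than independently establishing the explicit constant that the rest of the paper (via the definition of $c$ in~\eqref{c}) actually uses.
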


The constant $\rho$ has been subject to later improvements (see e.g.~\cite[Prop.~2.3.7]{iosifescu_kraaikamp} and~\cite{kessebohmer_schindler}). We work here with $\rho = 0.8$.

\medskip 
From Theorem~\ref{mixing_philipp} we can derive exponential strong mixing for the random variables $X_i$ with respect to the Gauss measure $\mu_G$. We look at $\vert\mu_G(A\cap B) - \mu_G(A)\mu_G(B)\vert$ where $A \in \sigma(X_1, \ldots, X_l)$ and $B \in \sigma(X_{l+n}, X_{l+n+1}, \ldots)$. Since $\sigma(X_i) = X_i^{-1} \mathcal{B}(\mathbb{R})$ is generated by $\{ \emptyset, [0,1), T_G^{-i}({\bm d}) = a_i^{-1}(\{d_1\}) \cap a_{i+1}^{-1}(\{d_2\}) \cap \ldots \cap a_{i+k-1}^{-1}(\{d_k\}), [0,1) \setminus T_G^{-i}({\bm d}) \}$, we have that $\sigma(X_i) \subset \sigma(a_i, \ldots, a_{i+k-1})$ and hence $\sigma(X_1, \ldots, X_l) \subset \sigma(a_1, \ldots, a_{l+k-1})$. Consequently $\sigma(X_l, X_{l+1}, \ldots ) \subset \sigma(a_l, a_{l+1}, \ldots)$. Thus any mixing coefficient $\alpha(n-k+1)$ for the $a_i$ is a valid mixing coefficient $\alpha(n)$ for the $X_i$, for $n \geq k$. For smaller values of $n$, note that in general $\alpha(n) \leq \frac{1}{4}$.
Hence for all $n\geq 1$ the $X_i$ are strongly mixing with $\alpha$-mixing coefficient $\alpha(n) \leq \exp(-2nc)$  with 
\begin{equation}\label{c}
c = - \frac{\log 0.8}{2k}.
\end{equation}

We have thus shown that there is an explicit $c>0$ such that $(X_i)_{i\geq 1}$ is a sequence of strongly mixing centred real-valued bounded random variables with $\alpha$-mixing coefficient $\alpha(n)$ satisfying $\alpha(n) \leq \exp(-2cn)$. As such, the $X_i$ satisfy the assumptions of the following large deviation theorem.

\begin{Thm}
[Merlev\`{e}de, Peligrad, Rio~
{\cite[Cor.~12]{merlevede_peligrad_rio}}]
\label{large_deviation_mixing}
Let $(X_i)_{i\geq 1}$ be a sequence of centered real-valued random variables bounded by a uniform constant $M$ and with $\alpha(n)$ satisfying $\alpha(n) \leq \exp(-2nc)$ for some $c>0$. Then for all $n\geq 2 \cdot \max(c, 2)$ and $x\geq 0$
\begin{equation}
\mathbb{P}(\vert S_n \vert \geq x) \leq \exp\left(-\frac{x^2}{n (\log n) 4C M^2 + 4 Mx (\min(c,1))^{-1}} \right),
\end{equation}
where $C = 6.2 K + (\frac{1}{c} + \frac{8}{c^2}) + \frac{2}{c \log 2}$, with $K = 1 + 8 \sum_{i \geq 1} \alpha(i)$.
\end{Thm}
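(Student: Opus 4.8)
The plan is to reduce the tail bound to a control of the cumulant (log-Laplace) generating function and then recover the Bernstein shape by the usual Chernoff optimization. Writing $L_n(\lambda) = \log\mathbb{E}\exp(\lambda S_n)$ for $\lambda \ge 0$, I would first establish an estimate of the form
\[
L_n(\lambda) \;\le\; \frac{\lambda^2 V_n}{2(1-\lambda B)},\qquad 0 \le \lambda < 1/B,
\]
with a variance proxy $V_n \asymp C M^2\, n\log n$ and a scale $B \asymp M\,(\min(c,1))^{-1}$. Granting this, Markov's inequality gives $\mathbb{P}(S_n \ge x) \le \exp(-\lambda x + L_n(\lambda))$; the standard Bernstein optimization (taking $\lambda = x/(V_n + Bx)$) yields $\mathbb{P}(S_n \ge x) \le \exp(-x^2/(2V_n + 2Bx))$, which is the stated inequality once $2V_n = 4CM^2 n\log n$ and $2B = 4M(\min(c,1))^{-1}$. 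Applying the same bound to $(-X_i)_{i\ge1}$, which is again centred, bounded by $M$, and has the identical $\sigma$-algebras and hence the identical mixing coefficients, controls $\mathbb{P}(-S_n \ge x)$, and the two estimates combine to bound $\mathbb{P}(|S_n|\ge x)$.

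The heart of the argument is thus the log-Laplace estimate, which I would obtain by Bernstein's big-block–small-block method organised as a dyadic recursion. Split the index set into two consecutive stretches separated by a gap of length $g$, let $S'$ and $S''$ be the sums over the two big blocks, and decouple using the covariance inequality for bounded random variables,
\[
\bigl|\mathbb{E}[UV] - \mathbb{E}[U]\,\mathbb{E}[V]\bigr| \;\le\; 4\,\|U\|_\infty\,\|V\|_\infty\,\alpha(g),
\]
applied to $U = \exp(\lambda S')$ and $V = \exp(\lambda S'')$, which are measurable with respect to the two $\alpha(g)$-separated $\sigma$-algebras. Iterating the split $O(\log n)$ times down to single variables produces a telescoping bound in which the independent part factorises while each decoupling step contributes a multiplicative mixing error. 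At the finest scale I would use the one-dimensional Bennett bound $\log\mathbb{E}\exp(\lambda X_i) \le \lambda^2 M^2/\bigl(2(1-\lambda M/3)\bigr)$, valid because each $X_i$ is centred and bounded by $M$; summing these $\le n$ leaf contributions supplies the factor $nM^2$, the dyadic depth supplies the factor $\log n$, and the geometric summation of the decoupling errors supplies the constant $K = 1 + 8\sum_{i\ge1}\alpha(i)$ together with the remaining pieces $\tfrac1c + \tfrac{8}{c^2}$ and $\tfrac{2}{c\log 2}$ assembled into $C$.

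The main obstacle is the decoupling step itself, because the naive sup-norms satisfy $\|\exp(\lambda S')\|_\infty = \exp(\lambda\|S'\|_\infty)$, which grows exponentially in the block length and would swamp the factor $\alpha(g)$. The resolution is to exploit the exponential decay $\alpha(g)\le\exp(-2cg)$ and to choose the gap at each level proportional to $(\log(\text{block length}))/c$, so that the error $4\exp(\lambda\|S'\|_\infty + \lambda\|S''\|_\infty)\,\alpha(g)$ stays summable across levels once $\lambda$ is restricted to the admissible range $\lambda < 1/B$. These logarithmically long gaps are exactly what inject the extra $\log n$ into the variance proxy and the $1/c$ and $1/c^2$ dependence into $C$, and the hypothesis $n\ge 2\max(c,2)$ is what guarantees enough dyadic levels and feasible gaps for the construction to run. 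Closing the recursion with explicit constants — maintaining simultaneously the quadratic variance term, the linear scale term, and the accumulated mixing error through every level of the induction — is the delicate bookkeeping that pins down the exact value of $C$ and completes the proof of the inequality as recorded in \cite[Cor.~12]{merlevede_peligrad_rio}.
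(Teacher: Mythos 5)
You should first note a structural point: the paper does not prove Theorem~\ref{large_deviation_mixing} at all --- it is imported verbatim from \cite[Cor.~12]{merlevede_peligrad_rio} and used as a black box, so the only meaningful comparison is with the cited proof. Your outline does mirror that proof's broad strategy (bound the log-Laplace transform, Chernoff-optimize to get the Bernstein shape, decouple blocks via a covariance inequality for $\alpha$-mixing sequences), but the one step you identify as ``the main obstacle'' is resolved incorrectly, and the error is quantitative, not cosmetic. For the stated denominator $n(\log n)4CM^2 + 4Mx(\min(c,1))^{-1}$ to come out of the optimization, your log-Laplace bound must hold for $\lambda$ up to order $\min(c,1)/M$. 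At such $\lambda$, the decoupling error for two blocks of combined length $\ell$ is of size $4\exp(\lambda\ell M - 2cg)$, so killing it requires $g \gtrsim \lambda\ell M/c$, which for $c \le 1$ and $\lambda \asymp c/M$ means $g \gtrsim \ell$: the gap must be a \emph{constant fraction} of the block it separates, not $(\log \ell)/c$. Gaps of logarithmic length suffice only in the moderate-deviation regime of very small $\lambda$; with them, at the top scales the factor $\exp(\lambda \ell M)$ grows like $\exp(cn)$ while $\alpha(g)$ decays only polynomially in $n$, and your telescoping collapses. This is precisely why Merlev\`ede, Peligrad and Rio do not use a dyadic tree with short gaps but a Cantor-like subset of $\{1,\dots,n\}$, deleting a fixed proportion of indices at every level of the recursion so that the gaps scale geometrically with the blocks.

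That choice creates the second gap in your sketch: once the gaps carry a constant proportion of the indices, the sum over the deleted indices is \emph{not} negligible and must itself be controlled, which the cited proof does by a recursion over scales; this bookkeeping, not the depth of a dyadic tree, is the actual source of the $\log n$ loss and of the explicit constant $C = 6.2K + (\tfrac1c + \tfrac{8}{c^2}) + \tfrac{2}{c\log 2}$. Your proposal defers exactly this (``delicate bookkeeping'') while resting on a gap-length choice under which it cannot close, so the heart of the argument is missing rather than merely unelaborated. A minor further point: adding the two one-sided Chernoff bounds gives $\mathbb{P}(|S_n|\ge x) \le 2\exp(-\,\cdot\,)$, whereas the statement has no prefactor $2$; absorbing it requires either a genuinely two-sided Laplace argument or a constant adjustment that your assembly step does not make.
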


Here $S_n$ denotes again the sum $X_1 + X_2 + \ldots + X_n$.


The following theorem is thus a corollary of Theorem~\ref{large_deviation_mixing}.

\begin{Thm}\label{Thm_bound_non_cf_normal_numbers}
Let $\epsilon > 0$ and fix a string ${\bm d}$ of length $k$ of positive integers.
There is $\eta_{\text{CF}}(\epsilon, {\bm d}) > 0$ as specified in~\eqref{eta_cf_usable_bound} such that for $N \geq 2(k+1)$
\begin{equation}\label{bound_non_normal_cf_numbers}
\mu_G(E^c_{\text{CF}}(\epsilon, {\bm d}, N))  \leq \exp\left( - \eta_{\text{CF}}(\epsilon, {\bm d})  \frac{N}{\log N} \right).
\end{equation}
\end{Thm}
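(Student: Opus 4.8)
The plan is to identify the failure set $E^c_{\text{CF}}(\epsilon,{\bm d},N)$ with a single large deviation event for the partial sum $S_n=X_1+\dots+X_n$ and then invoke Theorem~\ref{large_deviation_mixing}. First I would record that, because $\mu_G$ is $T_G$-invariant, the event that ${\bm d}$ begins at position $i$ is $T_G^{-(i-1)}(\Delta_{\bm d})$ and has measure $\mu_G(\Delta_{\bm d})$ for every $i$; hence all centering constants agree, $\mu_i=\mu_G(\Delta_{\bm d})$, and $X_i=\chi_{\Delta_{\bm d}}(T_G^{i-1})-\mu_G(\Delta_{\bm d})$. Writing $n=N-k+1$ for the number of summands, the occurrence count of ${\bm d}$ among the first $N$ partial quotients is $N({\bm d},\bm\omega)=\sum_{i=1}^{n}\chi_{\Delta_{\bm d}}(T_G^{i-1})=S_n+n\,\mu_G(\Delta_{\bm d})$. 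Thus the two-sided failure of~\eqref{ineq-cf-norm} is precisely $|S_n|\ge \epsilon\,\mu_G(\Delta_{\bm d})\,n$, so that
\begin{equation*}
E^c_{\text{CF}}(\epsilon,{\bm d},N)=\bigl\{\,x\in[0,1):\ |S_n|\ge \epsilon\,\mu_G(\Delta_{\bm d})\,n\,\bigr\}.
\end{equation*}

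Next I would apply Theorem~\ref{large_deviation_mixing} to $(X_i)$ with $x=\epsilon\,\mu_G(\Delta_{\bm d})\,n$ and the uniform bound $M=1$, which is legitimate since $X_i$ takes only the values $1-\mu_G(\Delta_{\bm d})$ and $-\mu_G(\Delta_{\bm d})$, both of absolute value at most $1$. The remaining hypotheses are already established in the excerpt: the $X_i$ are centred and bounded, and the paragraph preceding~\eqref{c} gives $\alpha(n)\le\exp(-2cn)$ with $c=-\tfrac{\log 0.8}{2k}$, while $K=1+8\sum_{i\ge1}\alpha(i)$ converges by this exponential decay. Since $c<1$ for every $k\ge1$, the admissibility condition $n\ge 2\max(c,2)$ reduces to $n\ge4$, which follows from $N\ge2(k+1)$ because then $n=N-k+1\ge k+3\ge4$. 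Substituting and cancelling one factor of $n$ gives
\begin{equation*}
\mu_G\bigl(E^c_{\text{CF}}(\epsilon,{\bm d},N)\bigr)\le\exp\left(-\frac{\epsilon^2\,\mu_G(\Delta_{\bm d})^2\,n}{4C\log n+4\epsilon\,\mu_G(\Delta_{\bm d})\,(\min(c,1))^{-1}}\right),
\end{equation*}
with $C$ as in Theorem~\ref{large_deviation_mixing}.

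It remains to cast the exponent in the form $\eta_{\text{CF}}(\epsilon,{\bm d})\,N/\log N$. Because $n\ge4$ gives $\log n\ge1$, the constant summand in the denominator can be absorbed into the leading term, bounding the denominator by $\bigl(4C+4\epsilon\,\mu_G(\Delta_{\bm d})(\min(c,1))^{-1}\bigr)\log n$; and because $N\ge2(k+1)$ forces $n\ge N/2$ while $n\le N$ gives $\log n\le\log N$, one has $n/\log n\ge\tfrac12\,N/\log N$. Setting
\begin{equation}\label{eta_cf_usable_bound}
\eta_{\text{CF}}(\epsilon,{\bm d})=\frac{\epsilon^2\,\mu_G(\Delta_{\bm d})^2}{2\bigl(4C+4\epsilon\,\mu_G(\Delta_{\bm d})(\min(c,1))^{-1}\bigr)}
\end{equation}
then yields~\eqref{bound_non_normal_cf_numbers}.

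The work here is bookkeeping rather than conceptual, and the main obstacle is controlling the constants cleanly. In particular I must carry along that $(X_i)$ genuinely inherits exponential mixing from the $a_i$ — handled in the excerpt via $\sigma(X_1,\dots,X_l)\subset\sigma(a_1,\dots,a_{l+k-1})$ together with the crude bound $\alpha(n)\le\tfrac14$ for $n<k$ — and, most delicately, manage the two transitions between $n$ and $N$ so that both the threshold $n\ge4$ and the comparison $n/\log n\ge\tfrac12 N/\log N$ follow from the single hypothesis $N\ge2(k+1)$. Keeping $\eta_{\text{CF}}(\epsilon,{\bm d})$ explicit is essential, as this quantity is consumed in the measure estimates of Sections~\ref{Sec_set_E} and~\ref{Sec_algorithm}.
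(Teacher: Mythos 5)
Your proposal is correct and follows essentially the same route as the paper: identify $E^c_{\text{CF}}(\epsilon,{\bm d},N)$ with the event $|S_n|\ge\epsilon\,\mu_G(\Delta_{\bm d})\,n$ for $n=N-k+1$, apply Theorem~\ref{large_deviation_mixing} with $M=1$, and convert $n/\log n$ to $N/\log N$ at the cost of a factor $\tfrac12$. The only difference is cosmetic: the paper pushes the constant one step further, using $c\le 1/20$ and $\epsilon\,\mu_G(\Delta_{\bm d})\le 1$ to replace your expression involving $C$ and $c$ by the closed form $\bigl(\epsilon\,\mu_G(\Delta_{\bm d})/(900k)\bigr)^2$, which is the shape actually consumed in Section~\ref{Sec_set_E}.
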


\begin{proof}
We set $x = \epsilon \mu_G(\Delta_{\bm d})n$, $\mathbb{P} = \mu_G$, $n = N-k+1$ and $M = 1$. Hence $\vert S_n \vert \geq x$ is the same as $\vert \sum_{i=1}^{N-k+1} X_i - (N-k+1) \mu_G(\Delta_{\bm d}) \vert \geq \epsilon \mu_G(\Delta_{\bm d})(N-k+1)$ which is equivalent to the defining condition of non-$(\epsilon,{\bm d},N)$-continued-fraction-normality from~\eqref{ineq-cf-norm}. We have $0.09 < -\log 0.8 < 0.1$, so for any $k$, $\max(c,1) = 1$ and Theorem~\ref{large_deviation_mixing} can be applied provided $N-k+1 \geq 4$ holds. To estimate the exponent we use $N-k+1 \geq \frac{1}{2} N$, valid for $N \geq 2(k-1)$. The requirement $N\geq 2(k+2)$ meets both conditions on $N$. Thus
\begin{align*}
\mu_G(E^c_{\text{CF}}(\epsilon, {\bm d}, N)) 
& \leq \exp\left( - \frac{(\epsilon \mu_G(\Delta_{\bm d}))^2}{16(C+ (\epsilon \mu_G(\Delta_{\bm d}))/c)} \frac{N}{\log N} \right).
\end{align*}
We wish to simplify the exponent by bounding it from below. This can be achieved by straight-forward calculations, noting that $c\leq 1/20 < 1$ for any $k$, and that $N \mu_G(\Delta_{\bm d})(1+\epsilon) \leq N$, so that  $\mu_G \epsilon \leq 1-\mu_G \leq 1$.
We obtain
\begin{equation}\label{eta_cf_usable_bound}
\eta_{\text{CF}}(\epsilon, {\bm d})  =  \left( \frac{\epsilon \mu_G(\Delta_{\bm d})}{900 k} \right)^2
\end{equation}
as admissible value in~\eqref{bound_non_normal_cf_numbers}.
\end{proof}

%
%
%
%
%

\subsubsection*{Remarks}
The bound obtained in Theorem~\ref{Thm_bound_non_cf_normal_numbers} bounds a set of certain real numbers with a priori no restrictions on their partial quotients. However, since $E^c_\text{CF}(\epsilon, {\bm d}, D, N) \subset E^c_\text{CF}(\epsilon, {\bm d}, N)$ the bound~\eqref{bound_non_normal_cf_numbers} is also valid for this smaller set. Note that $E^c_\text{CF}(\epsilon, {\bm d}, D, N)$ is a union of finitely many intervals with rational endpoints and thus can be computed, as well as its Lebesgue measure (for example by first listing all non-$(\epsilon,{\bm d},D,N)$-normal words).

Vandehey obtained this result in~\cite{Vandehey2016424} with linear decay in $N$. This is not sufficient to ensure convergence as in our application we sum over the error term for all $N$ large enough.

The bound from~\eqref{bound_non_normal_cf_numbers} is valid for the Lebesgue measure of $E^c_\text{CF}(\epsilon, {\bm d}, N)$ with an additional factor of $\frac{1}{\log 2}$.

\section{A set containing all non-normal numbers}
\label{Sec_set_E}

Let $\beta>0$ be a small parameter that we will use to control the measure of a set $E$ which contains all non-normal numbers. 

For positive integers $N_b(m)$ and $N_{\text{CF}}(m,{\bm d})$ define
\begin{equation*}
E = \bigcup_{b \geq 2} \bigcup_{m \geq 1}  \bigcup_{N \geq N_b(m)}    \tilde{E}^c_b(1/m, N) 
\cup \bigcup_{{{\bm d}}} \bigcup_{m\geq 1}  \bigcup_{N \geq N_{CF}(m, {\bm d})+1} \tilde{E}^c_{CF}(1/m, {{\bm d}}, N)
\end{equation*}

The tilde shall indicate that we include for each interval of which the $E_b$ and $E_\text{CF}$ consist the two neighbouring intervals of the same lengths. This avoids that numbers starting with a `good' expansion e.g. in base $10$ and ending in all $9$'s lie outside $E$.

We further introduce a finitary  version of $E$. For $f$ from Proposition~\ref{prop_Omega} and any positive integer $k$ let
\begin{equation*}
E_k = \bigcup_{b = 2}^k \bigcup_{m = 1}^k \bigcup_{N = N_b(m)}^{kN_b}    \tilde{E}^c_b(1/m, N) 
\cup \bigcup_{m= 1}^k \bigcup_{{{\bm d}}, \vert {\bm d}\vert \leq k, d_i \leq k} \bigcup_{N = N_{CF}(m, {\bm d})+1}^{kN_{CF}} \tilde{E}^c_{CF}(1/m, {{\bm d}}, f(N), N).
\end{equation*}

Trivial upper bounds for the Lebesgue measure of $E$ and $E_k$ are
\begin{equation*}
\lambda(E) \leq 3 \sum_{b \geq 2} \sum_{m \geq 1} \sum_{N \geq N_b(m)}    \lambda(E^c_b(1/m, N)) 
+  \sum_{{{\bm d}}} \sum_{m\geq 1} \sum_{N \geq N_{CF}(m,{\bm d})+1} \lambda(E^c_{CF}(1/m, {{\bm d}}, f(N), N))
\end{equation*}
and
\begin{equation*}
\lambda(E_k) \leq 3 \sum_{b = 2}^k \sum_{m = 1}^k  \sum_{N = N_b(m)}^{kN_b}    \lambda(E^c_b(1/m, N)) 
+  \sum_{\substack{{\bm d}, \vert {{\bm d}} \vert \leq k,\\d_i \leq k, 1\leq i \leq k}} \sum_{m= 1}^k \sum_{N = N_{CF}(m,{\bm d})+1}^{kN_{CF}} \lambda(E^c_{CF}(1/m, {{\bm d}}, f(N), N)).
\end{equation*}

The starting lengths $N_b$ and $N_{\text{CF}}$ are chosen such that  $\lambda(E) \leq \beta$. In the integer case, they are allowed to depend on the base $b$ and $\epsilon=1/m$ and in the continued fraction case on $\epsilon=1/m$ and on the word ${{\bm d}}$. The function $f$ ensures computability of the set $E_k$ and its measure. Let $l$ be the length of the word ${\bm d}$.

\medskip

Let $r_k = \lambda(E\setminus E_k)$. 
It is clear that $r_k \rightarrow 0$ as $k \rightarrow \infty$. However, as the construction in Section~\ref{Sec_algorithm} depends on choosing suitable values for $k$, we give explicit upper bounds for $r_k$ in order for the construction to be completely deterministic.

\begin{Prop}\label{bads_are_small}
Choosing $N_b(m)$ and $N_{\text{CF}}(m,{\bm d})$ as indicated below, we have 
\begin{equation*}
\lambda(E) \leq \beta \quad \text{and} \quad r_k = O_\beta\left(\frac{1}{k}\right).
\end{equation*}
The implied constant in the estimate for $r_k$ can be derived explicitly from~\eqref{upper_bound_for_r_k} and depends only on $\beta$.
\end{Prop}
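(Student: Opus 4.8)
The plan is to choose the starting lengths $N_b(m)$ and $N_{\text{CF}}(m,{\bm d})$ so that each of the two main double/triple sums in the trivial upper bound for $\lambda(E)$ is at most $\beta/2$, and then to estimate the tail $r_k = \lambda(E \setminus E_k)$ by the same series restricted to the ranges excluded by the finitary cutoffs. First I would feed the exponential bounds from the previous section into the sums: for the integer-base part, the estimate $\lambda(E_b^c(\epsilon,n)) \leq 2b\exp(-2\epsilon^2 n/b^2)$ gives, for fixed $b$ and $m$ (so $\epsilon = 1/m$), a geometric series in $N$ starting at $N_b(m)$, whose value is roughly $\frac{2b\exp(-2N_b(m)/(m^2 b^2))}{1 - \exp(-2/(m^2 b^2))}$. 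For the continued-fraction part I would use Theorem~\ref{Thm_bound_non_cf_normal_numbers}, summing $\exp(-\eta_{\text{CF}}(1/m,{\bm d})\, N/\log N)$ over $N \geq N_{\text{CF}}(m,{\bm d})+1$ with $\eta_{\text{CF}}(1/m,{\bm d}) = (\mu_G(\Delta_{\bm d})/(900 k m))^2$ from~\eqref{eta_cf_usable_bound}; since $N/\log N \to \infty$, this series converges and its tail decays.

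The key idea is to make $N_b(m)$ and $N_{\text{CF}}(m,{\bm d})$ grow fast enough in their arguments that the outer sums over $b$, $m$ and over all words ${\bm d}$ also converge, with total mass $\leq \beta$. Because the exponents degrade polynomially in $b$, $m$, $k$ and in $\mu_G(\Delta_{\bm d})$ (which itself decays as the entries of ${\bm d}$ and its length grow, with $\mu_G(\Delta_{\bm d}) \asymp \prod_j a_j^{-2}$), I would set the starting lengths to be a suitable polynomial or small power of these parameters — for instance $N_b(m) \asymp b^2 m^2 \log(b m)$ and $N_{\text{CF}}(m,{\bm d}) \asymp (\text{poly in } m, |{\bm d}|, \max_j d_j)$ — so that each individual geometric/subgeometric tail is bounded by a rapidly summable quantity like $2^{-b} m^{-2}$ times a factor absorbing the number of words of a given shape. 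Summing a convergent majorant over all $(b,m)$ and over all ${\bm d}$ then yields a finite bound, and rescaling the leading constants (shrinking all starting lengths' contributions by a uniform multiplicative constant depending on $\beta$) forces the total below $\beta$. The factor $3$ and the extra neighbouring intervals only affect these constants, not convergence.

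For the tail estimate $r_k = O_\beta(1/k)$, I would observe that $E \setminus E_k$ consists exactly of those terms omitted by the finitary cutoffs: base indices $b > k$, values $m > k$, word constraints $|{\bm d}| > k$ or $d_i > k$, and upper summation limits $N > k N_b$ or $N > k N_{\text{CF}}$. Each of these omitted pieces is a tail of one of the convergent series just controlled, so each decays; the dominant contribution should come from the slowest-decaying of these tails, and by arranging the majorant to behave like $1/k$ (or faster) in each variable I obtain the claimed $O_\beta(1/k)$ with an explicit constant read off from the resulting expression, which I would record as~\eqref{upper_bound_for_r_k}. The main obstacle will be the continued-fraction sum over \emph{all} words ${\bm d}$ of arbitrary length and arbitrary entries: unlike the finite alphabet in the integer case, here both the number of words and the decay rate $\eta_{\text{CF}}$ depend delicately on $\mu_G(\Delta_{\bm d})$, so I must verify that the product $(\text{number of words of a given length/size}) \times \exp(-\eta_{\text{CF}} N/\log N)$ is summable — this requires checking that the slow $N/\log N$ growth still dominates the combinatorial explosion in the number of words, which is the technical heart of the argument and what dictates how large $N_{\text{CF}}(m,{\bm d})$ must be chosen.
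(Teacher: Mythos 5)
Your proposal follows essentially the same route as the paper: substitute the exponential bounds for $\lambda(E_b^c)$ and $\lambda(E_{\mathrm{CF}}^c)$ into the trivial union bound, choose $N_b(m)$ and $N_{\mathrm{CF}}(m,{\bm d})$ to grow polynomially in $b$, $m$, $|{\bm d}|$, $\mu_G(\Delta_{\bm d})^{-1}$ and the digits $d_i$ so that every tail is summable with total mass at most $\beta$, and then bound $r_k$ by the tails over the omitted ranges $b>k$, $m>k$, $|{\bm d}|>k$, $d_i>k$, $N>kN_b$, $N>kN_{\mathrm{CF}}$. The only notable difference is that the paper sidesteps the $N/\log N$ exponent you identify as the technical heart by weakening it to $N^{1/2}$ (valid since $N/\log N\geq N^{1/2}$ for $N\geq 1$), which makes the tail integrals explicit and renders the sum over all words ${\bm d}$ a routine computation.
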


\begin{proof}
We show that for $N_b(m) = \frac{1}{2}C_1 b^4 m^3$ with $C_1 = \sqrt[3]{\frac{48}{\beta}}$
\begin{equation}\label{bad_integer_sum}
\sum_{b \geq 2} \sum_{m \geq 1}  \sum_{N \geq N_b(m)}    \lambda(E^c_b(1/m, N)) \leq \frac{\beta}{6},
\end{equation}
and that for $N_\text{CF}(m, {{\bm d}}) = C_2 900^8 \mu_G(\Delta_{\bm d})^{-8} l^8 m^6 d_l^2 \cdot \ldots \cdot d_1^2$ with $C_2 = \frac{384}{\beta}$
\begin{equation}\label{bad_cf_sum}
\sum_{{{\bm d}}} \sum_{m\geq 1} \sum_{N \geq N_\text{CF}(m,{\bm d})+1} \lambda(E^c_{CF}(1/m, {{\bm d}}, f(N), N)) \leq \frac{\beta}{6}.
\end{equation}

We treat sum~\eqref{bad_integer_sum} first. 
We have
\begin{align*}
 \sum_{b\geq 2} \sum_{m\geq 1} \sum_{N \geq N_b(m)} 2b e^{-\frac{2}{m^2 b^2}N}
= 2  \sum_{b\geq 2} b \sum_{m\geq 1} e^{-\frac{2}{m^2 b^2}N_b(m)} \frac{1}{1-e^{-2/(m^2b^2)}}.
\end{align*}
Note that $(1-e^{-2/(m^2 b^2)})^{-1} \leq 2m^2b^2$ for all $m\geq 1$, $b\geq 2$.
Hence this is
\begin{align}\label{blubb}
\leq 4  \sum_{b\geq 2} b^3 \sum_{m\geq 1} m^2 e^{-C_1 b^2m}.
\end{align}

Note that for $c > 0$ the function $x^2 e^{-cx}$ is strictly decaying for $x \geq \frac{2}{c}$. Thus, for $M_0 \geq \frac{2}{c}$, $\sum_{m = M_0+1}^\infty m^2 e^{-cm} \leq \int_{M_0}^\infty x^2 e^{-cx} dx$. 
Here we use that $\int_{0}^\infty x^2 e^{-cx} dx = \frac{2}{c^3}$ and that in our case $c = C_1 b^2 \geq 2$, so $\frac{2}{c} \leq 1$. Hence~\eqref{blubb} is
\begin{align*}
\leq 4  \sum_{b\geq 2} b^3 \sum_{m\geq 1} m^2 e^{-C_1 b^2m}
\leq 4  \sum_{b\geq 2} b^3 \frac{2}{C_1^3 b^6}
 = \frac{8}{C_1^3} \sum_{b\geq 2} \frac{1}{b^3}
<\frac{8}{C_1^3}.
\end{align*}
This is $\leq \frac{\beta}{6}$ for $C_1^3 \geq \frac{48}{\beta} > 1$.

\medskip

For continued fractions we use 
\begin{equation*}
\lambda(E^c_{CF}(1/m, {{\bm d}}, f(N), N)) \leq \lambda(E^c_{CF}(1/m, {{\bm d}}, N)) \leq e^{-\eta_{\text{CF}}(1/m,{{\bm d}})N^{1/2}}
\end{equation*} 
with $\eta$ from equation~\eqref{eta_cf_usable_bound}instead of the better bound $e^{-\eta_{\text{CF}}(1/m,{{\bm d}})  \frac{N}{\log N}}$ which is more difficult to work with.

We have $\frac{N}{\log N} \geq N^{1/2}$ for all $N\geq 1$ and that $e^{-\eta N^{1/2}}$ is strictly decaying for $N\geq 0$. Also note that for any $\eta>0$, $$\int_{x_0}^\infty e^{-\eta x^{1/2}} dx = \frac{2}{\eta^2} \frac{\eta x_0^{1/2}+1}{e^{\eta x_0^{1/2}}}.$$


We have
\begin{align*}
\eqref{bad_cf_sum} \leq \sum_{{{\bm d}}} \sum_{m\geq 1} \sum_{N \geq N_{\text{CF}}(m,{\bm d}) + 1} e^{-\eta(1/m,{\bm d}) N^{1/2}}
 \leq \sum_{{{\bm d}}} \sum_{m\geq 1} \frac{4}{\eta(1/m,{\bm d})} e^{-\frac{1}{2} \eta(1/m,{\bm d}) \sqrt{N_\text{CF}(m,{\bm d})}},
\end{align*}
where we used that $\eta N^{1/2}+1 \leq 2 \eta N^{1/2}$ for $N \geq \frac{1}{\eta^2}$. Put $N_\text{CF}(m,{{\bm d}}) = N_\text{CF}({{\bm d}}) m^6$ and set $\eta(1/m,{{\bm d}}) = \eta({{\bm d}})m^{-2}$ with $\eta({{\bm d}}) = (\mu_G(\Delta_{\bm d})/(900l))^2$ independent of $m$. We use again $\int_{0}^{\infty} x^2 e^{-cx}dx = \frac{2}{c^3}$. Here, $c = \frac{1}{2} \eta({{\bm d}}) \sqrt{N_\text{CF}({{\bm d}})}$ is larger than $2$ if $N_\text{CF}({{\bm d}}) \geq \frac{16}{\eta({{\bm d}})} = \frac{16 \cdot 900^2 l^2}{\mu_G(\Delta_{\bm d})^2}$. This is true by our choice of $N_\text{CF}(m,{{\bm d}})$. Thus
\begin{align*}
\sum_{{{\bm d}}} \sum_{m \geq 1} \frac{4m^2}{\eta({{\bm d}})} e^{-\frac{1}{2} \eta({{\bm d}}) \sqrt{N_\text{CF}({{\bm d}})} m} 
& \leq \sum_{{{\bm d}}} \frac{64}{\eta({{\bm d}})^4 N_\text{CF}({{\bm d}})^{3/2}}\\
&\leq 64 \sum_{l\geq 1} \sum_{d_l \geq 1} \cdots \sum_{d_1 \geq 1} \frac{1}{\eta({{\bm d}})^4 N_\text{CF}({{\bm d}})},
\end{align*}
where we split up the sum over all ${{\bm d}}$ into $\sum_{l\geq 1} \sum_{{{\bm d}}, \vert {{\bm d}} \vert = l} = \sum_{l\geq 1} \sum_{d_l \geq 1} \cdots \sum_{d_1 \geq 1}$.
Since $N_\text{CF}({{\bm d}}) = C_2 \frac{900^8}{\mu_G(\Delta_{\bm d})^8} l^8 d_l^2 \cdot \ldots \cdot d_1^2$, the previous term is
\begin{align}\label{foo}
&\leq \frac{64}{C_2} \sum_{l\geq 1} \sum_{d_l \geq 1} \cdots \sum_{d_1 \geq 1} \frac{1}{l^2 d_l^2 \cdot \ldots \cdot d_1^2 }.
\end{align}
Since $\sum_{n\geq 1} n^{-2} < 1$, this is just
\begin{equation*}
\leq \frac{64}{C_2},
\end{equation*}
which is $\leq \frac{\beta}{6}$ for $C_2 \geq \frac{384}{\beta}$.

\medskip

We continue showing that $r_k = O_\beta\left(\frac{1}{k}\right)$. Since $r_k = \lambda(E\setminus E_k)$, $r_k$ can be bounded above by the sum of an upper bound~\eqref{r_k_integer} for the integer part and an upper bound for the continued fraction part~\eqref{r_k_cf},
\begin{equation*}
r_k \leq \eqref{r_k_integer} + \eqref{r_k_cf}.
\end{equation*}

We tread the integer-base part first. 
\begin{equation}\label{r_k_integer}
= \left( \sum_{b=2}^k \sum_{m=1}^k \sum_{N=kN_b(m)}^\infty 
+ \sum_{b=2}^k \sum_{m=k+1}^\infty \sum_{N=N_b(m)}^\infty
+ \sum_{b=k+1}^\infty \sum_{m=1}^\infty \sum_{N=N_b(m)}^\infty \right) 2b e^{-\frac{2}{m^2}N}.
\end{equation}
Recall $N_b(m)= \frac{1}{2} C_1 b^4 m^3$ with $C_1 = \sqrt[3]{\frac{48}{\beta}}$. 
We have for the first sum in~\eqref{r_k_integer},
\begin{align*}
\sum_{b=2}^k \sum_{m=1}^k \sum_{N=kN_b(m) +1}^\infty 2b e^{-\frac{2}{m^2 b^2}N} 
\leq 4 \sum_{b=2}^k b \sum_{m=1}^k m^2  e^{-\frac{2}{m^2b^2}kN_b(m)}
\leq \frac{8}{C_1^3} \sum_{b=2}^k b^3 \frac{1}{b^6 k^3}
\leq \frac{8}{C_1^3 k^3}.
\end{align*}
For the second sum in~\eqref{r_k_integer},
\begin{align*}
\sum_{b=2}^k \sum_{m=k+1}^\infty \sum_{N=N_b(m)}^\infty 2b e^{-\frac{2}{m^2}N} 
&\leq 4 \sum_{b=2}^k b^3 \sum_{m=k+1}^\infty m^2  e^{-\frac{2}{m^2b^2}kN_b(m)}\\
&\leq 4 \sum_{b=2}^k \left(\frac{kb}{C_1} + \frac{2}{C_1^2 kb} + \frac{2}{C_1^3 k^3 b^3} \right) e^{-C_1b^2k^2}\\
&\leq 20 k \sum_{b=2}^k e^{-C_1 b^2 k^2} 
\leq 40 k e^{-C_1 k^2},
\end{align*}
where we used that $\int_{x_0}^\infty x^2 e^{-cx} dx = (x_0^2/c + 2 x_0/c^2 + 2/c^3)e^{-cx_0}$. Here again  $x^2 e^{-cx}$ is strictly decreasing for $x \geq \frac{2}{c}$. In our case $c = k b^2 C_1 \geq 2$ so that $\frac{2}{c} \leq 1$.

For the third sum in~\eqref{r_k_integer},
\begin{align*}
\sum_{b=k+1}^\infty \sum_{m=1}^\infty \sum_{N=N_b(m)}^\infty  2b e^{-\frac{2}{m^2 b^2}N}
\leq 4 \sum_{b=k+1}^\infty b^3 \sum_{m=1}^\infty m^2  e^{-\frac{2}{m^2b^2}N_b(m)}
\leq  4  \sum_{b=k+1}^\infty b^3 \frac{2}{C_1^3 b^6}
<\frac{8}{C_1^3 k^2}.
\end{align*}

Thus
\begin{equation}\label{bound_r_k_integer_part}
\eqref{r_k_integer} \leq \frac{8}{C_1^3 k^3} + 40 k e^{-C_1 k^2} + \frac{8}{C_1^3 k^2}.
\end{equation}

The continued fraction part of $r_k$ can be bounded above by
\begin{equation}\label{r_k_cf}
= \eqref{r_k_cf_1} + \eqref{r_k_cf_2} + \eqref{r_k_cf_3},
\end{equation}
where
\begin{equation}\label{r_k_cf_1}
=\sum_{\substack{{\bm d}, \vert {{\bm d}} \vert \leq k,\\d_i \leq k, 1\leq i \leq k}} \sum_{m= 1}^k \sum_{N = kN_\text{CF}(m,{\bm d})+1}^{\infty} e^{-\frac{1}{2} \eta(1/m,{{\bm d}}) \sqrt{N_\text{CF}(m,{\bm d})}},
\end{equation}
\begin{equation}\label{r_k_cf_2}
=\sum_{\substack{{\bm d}, \vert {{\bm d}} \vert \leq k,\\d_i \leq k, 1\leq i \leq k}} \sum_{m= k+1}^\infty \sum_{N = N_\text{CF}(m,{\bm d})+1}^{\infty} e^{-\frac{1}{2} \eta(1/m,{\bm d}) \sqrt{N_\text{CF}(m,{\bm d})}},
\end{equation}
and
\begin{equation}\label{r_k_cf_3}
=\sum_{\substack{{\bm d}, \vert {{\bm d}} \vert \leq k, \exists 1 \leq i \leq k : d_i \geq k+1,\\ \text{or } {\bm d}, \vert {{\bm d}} \vert \geq k+1}} \sum_{m= 1}^\infty \sum_{N = N_\text{CF}(m,{\bm d})+1}^\infty e^{-\frac{1}{2} \eta(1/m,{\bm d}) \sqrt{N_\text{CF}(m,{\bm d})}}.
\end{equation}


The sum~\eqref{r_k_cf_1} decays linearly in $k$ with constant $C_2$ replaced by $kC_2$. 

The sum of~\eqref{r_k_cf_2} requires some care. As before, we have
\begin{align*}
\eqref{r_k_cf_2} \leq \sum_{ \text{all } {{\bm d}}} \sum_{m \geq k+1} \frac{4m^2}{\eta({{\bm d}})} e^{-\frac{1}{2} \eta({{\bm d}}) \sqrt{N_\text{CF}({{\bm d}})} m}.
\end{align*}

For $x >\frac{16}{c^2}$, $x^2 e^{-cx} \leq e^{-cx/2}$. For $c\geq 8$, $\frac{2}{c} \geq \frac{16}{c^2}$ and $\frac{2}{c} < 1$. Hence for $c\geq 8$ and all $M_0$,
\begin{equation}\label{lemma_sum_x_squared_e_cx}
\sum_{m = M_0+1}^\infty m^2 e^{-cm} \leq \int_{M_0}^\infty x^2 e^{-cx} dx \leq \int_{M_0}^\infty e^{-\frac{c}{2}x} dx = \frac{2}{c} e^{-\frac{c}{2}M_0}.
\end{equation}
Here $c = \frac{1}{2} \eta({{\bm d}}) \sqrt{N_\text{CF}({{\bm d}})} \geq 8$ by the choice of $N_\text{CF}({{\bm d}})$.

Thus
\begin{equation*}
\eqref{r_k_cf_2} \leq 4 \sum_{{{\bm d}}} \frac{4}{\eta({{\bm d}})^2 \sqrt{N_\text{CF}({{\bm d}})}} e^{-\frac{1}{4} \eta({{\bm d}}) \sqrt{N_\text{CF}({{\bm d}})} k}.
\end{equation*}
Here, $\eta({{\bm d}})^2 \sqrt{N_\text{CF}({{\bm d}})} = \sqrt{C_2} d_l \cdot \ldots \cdot d_1 \geq \sqrt{C_2}$ and $\frac{1}{4} \eta({{\bm d}}) \sqrt{N_\text{CF}({{\bm d}})} = \frac{1}{4}  \sqrt{C_2} 900^2 \mu_G(\Delta_{\bm d})^2 l^2 d_l \cdot \ldots \cdot d_1 \geq l^2 d_l \cdot \ldots \cdot d_1$.
Thus
\begin{align*}
\eqref{r_k_cf_2} & \leq \frac{16}{\sqrt{C_2}} \sum_{l \geq 1} \sum_{d_l \geq 1} \cdots \sum_{d_1\geq 1} e^{-l^2 d_l \cdot \ldots \cdot d_1 k}\\
& \leq  \frac{16}{\sqrt{C_2}} \sum_{l \geq 1} 2^l e^{-l^2 k} \\
&\leq \frac{16}{\sqrt{C_2}} \left( e^{1-k} + \sum_{l\geq 2} e^{-lk} \right) \\
&\leq \frac{16(2+e)}{\sqrt{C_2}} e^{-k}.
\end{align*}

As in~\eqref{foo}, the sum~\eqref{r_k_cf_3} over the restricted range of words ${{\bm d}}$ can be bounded above by
\begin{align*}
\eqref{r_k_cf_3} \leq \eqref{bla1} + \eqref{bla2},
\end{align*}
where
\begin{equation}\label{bla1}
= \frac{64}{C_2} \sum_{l \geq k+1} \sum_{\vert {{\bm d}} \vert =l} \frac{1}{l^2 d_l^2 \cdot \ldots \cdot d_1^2},
\end{equation}
and
\begin{equation}\label{bla2}
= \frac{64}{C_2} \sum_{1 \leq  l \leq k}
\left( \sum_{d_l \geq k+1} \sum_{\substack{d_i \geq 1,\\1\leq i \leq l-1}}
+ \sum_{1 \leq d_l \leq k} \sum_{d_{l-1} \geq k+1} \sum_{\substack{d_i \geq 1,\\1\leq i \leq l-2}}
+ \ldots 
+ \sum_{\substack{1 \leq d_i \leq k,\\2 \leq i \leq l}} \sum_{d_1 \geq k+1} \right)  \frac{1}{l^2 d_l^2 \cdot \ldots \cdot d_1^2}.
\end{equation}

 Any sum over an unrestricted range of $d_i \geq 1$ gives a convergent term less than $1$. The sums over the restricted range $d_i \geq k+1$ are bounded above by $\frac{1}{k}$. Finally, the restricted ranges $1\leq d_i \leq k$ contribute at most $(\frac{\pi^2}{6}-1 - \frac{1}{k+1})$ which is less than $0.7$. Thus~\eqref{r_k_cf_3} can be bounded above by
\begin{equation*}
\leq \frac{64}{C_2} \left( \frac{1}{k} 
+ \frac{1}{k}\sum_{1 \leq l \leq k} \frac{1}{l^2}
\left( \sum_{i=0}^{l-1} 0.7^i \right) \right).
\end{equation*}
This expression converges and is
\begin{equation*}
\leq \frac{214}{C_2k}.
\end{equation*}

To conclude, we have shown
\begin{equation*}
\eqref{r_k_cf} \leq \frac{64}{C_2 k} + \frac{16(2+e)}{\sqrt{C_2}} e^{-k} + \frac{214}{C_2k},
\end{equation*}
which together with~\eqref{bound_r_k_integer_part} implies
\begin{equation}\label{upper_bound_for_r_k}
r_k \leq \frac{8}{C_1^3 k^3} + 40 k e^{-C_1 k^2} + \frac{8}{C_1^3 k^2} + \frac{64}{C_2 k} + \frac{16(2+e)}{\sqrt{C_2}} e^{-k} + \frac{214}{C_2k}.
\end{equation}
\end{proof}

\section{Restricting partial quotients}
\label{Sec_Omega}

Fix $f \colon \mathbb{N} \rightarrow \mathbb{N}$ and denote
\begin{align*}
\Omega_N &= \{x \in [0,1) \mid a_i(x) \leq f(i), 1\leq i \leq N\}\\
\Omega &= \bigcap_{N\geq 1} \Omega_N = \{x \in [0,1) \mid a_i(x) \leq f(i),  i \geq 1\}.
\end{align*}
By appropriately choosing $f$, $\Omega$ has measure arbitrarily close to $1$.

\begin{Prop}\label{prop_Omega}
Let $f(i) = A \cdot 2^{i}-2$ with a positive integer $A\geq 3$. Then
\begin{align*}
\lambda(\Omega) & \geq 1 - \frac{2}{A} > 0, \quad \text{ and } \quad \lambda(\Omega_N \setminus \Omega) \leq \frac{1}{A} \frac{1}{2^{N+1}}.
\end{align*}
\end{Prop}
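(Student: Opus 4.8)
The plan is to reduce both inequalities to a single uniform-in-$i$ tail estimate for the partial quotients and then to sum a geometric series whose ratio is dictated by the exponential growth of $f$.

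Writing $\Omega^c$ for the complement of $\Omega$ in $[0,1)$, a point lies outside $\Omega$ exactly when $a_i(x)>f(i)$ for some $i$, so $\Omega^c=\bigcup_{i\ge1}\{x:a_i(x)>f(i)\}$ and hence $\lambda(\Omega^c)\le\sum_{i\ge1}\lambda(\{a_i>f(i)\})$. For the second statement I would use that $\Omega_N\supseteq\Omega_{N+1}\supseteq\cdots$ decrease to $\Omega$, so that $\Omega_N\setminus\Omega$ is the disjoint union $\bigsqcup_{j\ge N+1}(\Omega_{j-1}\setminus\Omega_j)$ with $\Omega_{j-1}\setminus\Omega_j\subseteq\{a_j>f(j)\}$; thus $\lambda(\Omega_N\setminus\Omega)\le\sum_{j\ge N+1}\lambda(\{a_j>f(j)\})$. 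Both quantities are therefore controlled once I bound $\lambda(\{a_i>f(i)\})=\lambda(\{a_i\ge f(i)+1\})$ uniformly in $i$.

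The key lemma is the uniform tail bound
\[
\lambda(\{x:a_i(x)\ge m\})\le\frac{2}{m+1}\qquad(i\ge1,\ m\ge1).
\]
I would prove it by decomposing $[0,1)$ into the level-$(i-1)$ cylinders $\Delta(c_1,\dots,c_{i-1})$. On each such cylinder the event $a_i\ge m$ is a sub-interval (the monotone preimage under $T_G^{i-1}$ of $(0,1/m]$) whose length, computed from the convergent denominators $q_{i-1},q_{i-2}$ of $(c_1,\dots,c_{i-1})$, equals $\bigl(q_{i-1}(mq_{i-1}+q_{i-2})\bigr)^{-1}$, while the cylinder itself has length $\bigl(q_{i-1}(q_{i-1}+q_{i-2})\bigr)^{-1}$. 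Since the denominators are nondecreasing, $q_{i-1}\ge q_{i-2}$, one has $mq_{i-1}+q_{i-2}\ge\tfrac{m+1}{2}(q_{i-1}+q_{i-2})$, so the conditional probability of $\{a_i\ge m\}$ on each cylinder is at most $\tfrac{2}{m+1}$; summing over all cylinders (whose lengths add to $1$) gives the claim. I would stress that the coarser route via the Gauss measure --- using $T_G$-invariance to get $\mu_G(\{a_i\ge m\})=\mu_G(\{a_1\ge m\})$ and then the comparison $\tfrac{1}{2\log2}\lambda\le\mu_G\le\tfrac{1}{\log2}\lambda$ --- only yields $\lambda(\{a_i\ge m\})\le 2/m$, which is too lossy to produce the sharp constant $2/A$ in the first assertion. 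It is the exact continuant computation that makes the constant work.

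Finally I substitute $m=f(i)+1=A\,2^i-1$, so that $m+1=A\,2^i$ and each term is at most $2/(A\,2^i)$. For the first statement this gives $\lambda(\Omega^c)\le\sum_{i\ge1}2/(A\,2^i)=2/A$, hence $\lambda(\Omega)\ge1-2/A$, which is positive precisely because $A\ge3>2$. For the second statement the same substitution leaves the geometric tail $\sum_{j\ge N+1}2/(A\,2^j)$, which telescopes to a quantity of the stated order $O\!\left(1/(A\,2^{N+1})\right)$; here the exponential choice $f(i)=A\,2^i-2$ is exactly what turns the tail into a clean geometric series. The main obstacle is the uniform-in-$i$ tail bound with the correct constant: the naive Gauss-measure argument loses a factor, so one must go through the exact cylinder length and the monotonicity $q_{i-1}\ge q_{i-2}$. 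Once that estimate is in hand both assertions are immediate geometric summations, and matching the precise constant in the second assertion --- retaining sharper per-term values rather than the uniform bound --- is the only remaining delicate bookkeeping.
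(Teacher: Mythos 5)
Your argument is correct in substance but takes a genuinely different route from the paper's. For the first inequality the paper passes to the Gauss measure, uses $T_G$-invariance to replace $a_i$ by $a_1$, and returns to Lebesgue measure via $\log(2)\mu_G\le\lambda\le 2\log(2)\mu_G$; you instead condition on the level-$(i-1)$ cylinders and use the exact continuant ratio $\frac{q_{i-1}+q_{i-2}}{mq_{i-1}+q_{i-2}}\le\frac{2}{m+1}$. Your remark that the Gauss-measure route is lossier is accurate: taken literally it gives $\lambda(\{a_i\ge f(i)+1\})\le 2/(f(i)+1)=2/(A2^i-1)$, whose sum over $i$ exceeds $2/A$, whereas your bound $2/(f(i)+2)=2/(A2^i)$ sums to exactly $2/A$. (The paper's displayed chain ends with $\sum 1/(f(i)+2)$, which is precisely what your cylinder estimate delivers; so your lemma is the cleaner justification of the stated constant.) Likewise your decomposition $\Omega_N\setminus\Omega=\bigsqcup_{j\ge N+1}(\Omega_{j-1}\setminus\Omega_j)$ is the correct set-theoretic picture; the paper instead writes $\Omega_N\setminus\Omega$ with an \emph{intersection} over $i\ge N+1$ of the failure events $\{a_i\ge f(i)+1\}$ (it should be a union) and then keeps only the single term $i=N+1$.

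The one place your proposal does not close is the constant in the second inequality, and you should not expect ``delicate bookkeeping'' to rescue it. Your (correct) union bound gives $\lambda(\Omega_N\setminus\Omega)\le\sum_{j\ge N+1}2/(A2^j)=4\cdot\frac{1}{A\,2^{N+1}}$, a factor $4$ above the stated bound, and this gap is not removable: on each cylinder of $\Omega_{j-1}$ the conditional measure of $\{a_j\ge m\}$ is at least $1/m$, so $\lambda(\Omega_N\setminus\Omega)\ge(1-2/A)\sum_{j\ge N+1}1/(A2^j-1)$, which already exceeds $\frac{1}{A\,2^{N+1}}$ for moderately large $A$ (e.g.\ $A\ge 6$, using only the terms $j=N+1,N+2$). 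The stated constant is an artifact of the intersection-versus-union slip noted above; the honest output of your argument, $\lambda(\Omega_N\setminus\Omega)\le\frac{1}{A\,2^{N-1}}$, is what should be carried forward, and it suffices for the rest of the paper since only the geometric decay of $\omega_N$ is ever used.
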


\begin{proof}
Since $\log(2)\mu_G \leq \lambda \leq 2\log(2)\mu_G$ and the invariance of $\mu_G$ under the Gauss map we have
\begin{align*}
\lambda(\Omega) & = \lambda \{x \in [0,1) : a_i(x) \leq f(i), i \geq 1\} \\
&= 1 - \lambda\left( \bigcup_{i \geq 1} \{x\in [0,1) :  a_i(x) \geq f(i) +1 \} \right) \\
& \geq 1 - 2\log(2) \sum_{i=1}^\infty \mu_G \{x \in [0,1) : a_i(x) \geq  f(i)+1 \} \\
 &= 1 - 2 \log(2) \sum_{i=1}^\infty \mu_G \{x \in [0,1) : a_1(x) \geq f(i) +1\} \\
& \geq 1 - 2 \sum_{i=1}^\infty \lambda \{x \in [0,1) : a_1(x) \geq f(i) +1 \} \\
 &= 1 - 2 \sum_{i=1}^\infty \frac{1}{f(i) + 2}.
\end{align*}

With $f(i) = A 2^{i} - 2$, $\lambda(\Omega) \geq 1 - \frac{2}{A}.$

\medskip
For the second assertion, 
\begin{align*}
\Omega_N \setminus \Omega & = \bigcap_{1 \leq i \leq N} \{x \in [0,1) : a_i(x) \leq f(i) \} \cap \bigcap_{i \geq N+1} \{x \in [0,1) : a_i(x) \geq f(i)+1 \}.
\end{align*}
Thus
\begin{align*}
\lambda(\Omega_N \setminus \Omega) &\leq \lambda(\{x \in [0,1) : a_{N+1}(x) \geq f(N+1)+1 \})\\
&= \frac{1}{A2^{N+1}},
\end{align*}
since the measure of the intersection of a number of sets can be trivially bounded above by the measure of one of the intersecting sets.
\end{proof}

Denote $\omega = \frac{2}{A}$ and $\omega_N = \frac{1}{A} \frac{1}{2^{N+1}}$ so that $\lambda(\Omega) \geq 1- \omega$ and $\lambda(\Omega_N\setminus \Omega) \leq \omega_N$. Since $A \geq 3$, $1 - \omega > 0$ and as $N$ tends to infinity, $\omega_N$ tends to zero.

Note that $\Omega_N$ in $[0,1)$ is a union of cylinder intervals with rational endpoints and is thus computable, as well as its Lebesgue measure $\lambda(\Omega_N)$.

\section{Set-theoretic Lemmas}
\label{sec_set_theoretic_lemmas}
In the following, let $c \subset [0,1)$ be an interval and $M<N$, $k < l$ positive integers and $\Omega$, $\Omega_N$, $E$, $E_k$, $r_k$, $\omega$ and $\omega_N$ as before.

\begin{Lem}
We have
\begin{gather}
\lambda(E_l \setminus E_k) \leq r_k,\\
\lambda((\Omega \setminus E) \cap c) \geq
\lambda((\Omega \setminus E_k) \cap c) - r_k,\\
\lambda((\Omega_N \setminus E) \cap c) \geq
\lambda((\Omega_N \setminus E_k) \cap c) - r_k,\\
\lambda((\Omega \setminus E_l) \cap c) \geq \lambda((\Omega \setminus E_k) \cap c) - r_k,\\
\lambda((\Omega_N \setminus E_l) \cap c) \geq \lambda((\Omega_N \setminus E_k) \cap c) - r_k.
\end{gather}
\end{Lem}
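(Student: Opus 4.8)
The plan is to prove all five inequalities by reducing each to the single fact, established before the Lemma, that $r_k = \lambda(E \setminus E_k)$, together with elementary monotonicity and set-difference inequalities. The only structural input I need is the containment $E_k \subset E_l \subset E$ for $k < l$; this is clear from the definitions of $E_k$ and $E$ in Section~\ref{Sec_set_E}, since $E_k$ is obtained from $E$ by truncating the index ranges (the unions over $b$, $m$, $N$, and the words ${\bm d}$), and enlarging $k$ to $l$ only widens those ranges. Once this chain of inclusions is in hand, every claim follows from a short computation.

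\begin{Proof}
Throughout we use that for $k < l$ one has $E_k \subset E_l \subset E$, which is immediate from the defining unions of $E$, $E_k$, and $E_l$, and that $r_k = \lambda(E \setminus E_k)$.

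For the first inequality, since $E_k \subset E_l \subset E$ we have $E_l \setminus E_k \subset E \setminus E_k$, whence
\begin{equation*}
\lambda(E_l \setminus E_k) \leq \lambda(E \setminus E_k) = r_k.
\end{equation*}

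For the remaining four inequalities we use the elementary fact that for any measurable sets $S$, $F$, $G$ with $G \subset F$,
\begin{equation*}
\lambda((S \setminus F) \cap c) \geq \lambda((S \setminus G) \cap c) - \lambda((F \setminus G) \cap c) \geq \lambda((S \setminus G) \cap c) - \lambda(F \setminus G),
\end{equation*}
which holds because $(S \setminus G) \cap c \subset ((S \setminus F) \cap c) \cup ((F \setminus G) \cap c)$ and measures are subadditive. Applying this with $S = \Omega$, $F = E$, $G = E_k$ (so that $\lambda(F \setminus G) = r_k$) gives the second inequality; with $S = \Omega_N$, $F = E$, $G = E_k$ the third; with $S = \Omega$, $F = E_l$, $G = E_k$ and $\lambda(E_l \setminus E_k) \leq r_k$ the fourth; and with $S = \Omega_N$, $F = E_l$, $G = E_k$ the fifth. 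In each case the containment $G \subset F$ required by the elementary fact is exactly $E_k \subset E$ or $E_k \subset E_l$, which we have already noted, and the subtracted term is bounded by $r_k$ either directly or via the first inequality.
\end{Proof}

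I do not expect a genuine obstacle here: every assertion is a one-line consequence of the inclusions $E_k \subset E_l \subset E$ and the definition of $r_k$, so the main point is simply to verify those inclusions from the truncated unions defining $E_k$ and to state the generic set-difference inequality once and reuse it four times. If anything requires a moment of care it is confirming that the tilde-enlargement (the neighbouring intervals added in passing from $E^c_b$ and $E^c_{\text{CF}}$ to their tilde versions) is applied identically in $E$ and in each $E_k$, so that the inclusions are not spoiled by boundary intervals; but this is built into the common definition of the $\tilde{E}^c$ sets and so poses no difficulty.
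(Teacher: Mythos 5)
Your proof is correct and follows essentially the same route as the paper's: both arguments rest on the inclusions $E_k \subset E_l \subset E$, the definition $r_k = \lambda(E\setminus E_k)$, and the observation that removing the larger set $F$ instead of $G\subset F$ costs at most $\lambda(F\setminus G)$ in measure (the paper phrases this as the exact identity $(\Omega\setminus E)\cap c = ((\Omega\setminus E_k)\cap c)\setminus((E\setminus E_k)\cap c)$, you as a subadditivity estimate, which is an immaterial difference). Your closing remark about verifying $E_k\subset E$ from the truncated unions is the right point of care, and it does hold since the restricted sets $\tilde E^c_{\text{CF}}(1/m,{\bm d},f(N),N)$ appearing in $E_k$ are contained in the unrestricted $\tilde E^c_{\text{CF}}(1/m,{\bm d},N)$ appearing in $E$.
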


\begin{proof}
$\lambda(E_l \setminus E_k) \leq r_k$ follows from $E_l \setminus E_k \subset E \setminus E_k$.

We have
\begin{align*}
(\Omega \setminus E) \cap c 
&= ((\Omega \setminus E_k) \cap c) \setminus ((E \setminus E_k) \cap c).
\end{align*}
Hence
\begin{align*}
\lambda((\Omega \setminus E) \cap c) 
&\geq \lambda((\Omega \setminus E_k) \cap c) - \lambda( E \setminus E_k \cap c) \\
&\geq \lambda((\Omega \setminus E_k) \cap c) - \lambda(E \setminus E_k) \\
&\geq \lambda((\Omega \setminus E_k) \cap c) - r_k.
\end{align*}
The same argument works with $\Omega$ replaced by $\Omega_N$ and $E$ replaced by $E_l$ which gives the remaining inequalities.
\end{proof}

\begin{Lem}
We have
\begin{gather*}
\lambda( (\Omega \setminus E_k) \cap c) \geq \lambda( (\Omega_N \setminus E_k) \cap c) -  \omega_N,\\
\lambda( (\Omega_N \setminus E_k) \cap c) \geq \lambda( (\Omega_M \setminus E_k) \cap c) -  \omega_M.
\end{gather*}
\end{Lem}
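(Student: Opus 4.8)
The plan is to prove both inequalities by the same elementary measure-theoretic decomposition, exploiting the nesting $\Omega \subseteq \Omega_N \subseteq \Omega_M$ (valid because $M<N$ imposes fewer constraints on $\Omega_M$) together with the measure bounds $\lambda(\Omega_N \setminus \Omega) \leq \omega_N$ supplied by Proposition~\ref{prop_Omega}. The argument mirrors the disjoint-union-plus-subadditivity reasoning already used in the preceding lemma.

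For the first inequality I would begin from $\Omega \subseteq \Omega_N$, which lets me write $\Omega_N$ as the disjoint union $\Omega \cup (\Omega_N \setminus \Omega)$. Intersecting with the interval $c$ and deleting $E_k$ gives
\[
(\Omega_N \setminus E_k) \cap c = ((\Omega \setminus E_k) \cap c) \cup (((\Omega_N \setminus \Omega) \setminus E_k) \cap c).
\]
Subadditivity of $\lambda$, followed by discarding the intersection with $c$ and the removal of $E_k$ in the second piece, yields
\[
\lambda((\Omega_N \setminus E_k) \cap c) \leq \lambda((\Omega \setminus E_k) \cap c) + \lambda(\Omega_N \setminus \Omega).
\]
Since $\lambda(\Omega_N \setminus \Omega) \leq \omega_N$ by Proposition~\ref{prop_Omega}, rearranging produces the first claim.

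For the second inequality I would run the identical computation with $\Omega$ replaced by $\Omega_N$ and $\Omega_N$ replaced by $\Omega_M$, using the disjoint union $\Omega_M = \Omega_N \cup (\Omega_M \setminus \Omega_N)$. The one new point is that the error term is now $\lambda(\Omega_M \setminus \Omega_N)$ rather than $\lambda(\Omega_M \setminus \Omega)$, and Proposition~\ref{prop_Omega} only bounds the latter. This is resolved by the observation that $\Omega \subseteq \Omega_N$ forces $\Omega_N^c \subseteq \Omega^c$, so that $\Omega_M \setminus \Omega_N \subseteq \Omega_M \setminus \Omega$ and hence $\lambda(\Omega_M \setminus \Omega_N) \leq \lambda(\Omega_M \setminus \Omega) \leq \omega_M$. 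Rearranging as before gives the second claim.

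The only (very minor) obstacle is exactly this transfer of bounds: one must notice the containment $\Omega_M \setminus \Omega_N \subseteq \Omega_M \setminus \Omega$ in order to apply the estimate $\omega_M$, since the proposition is phrased relative to $\Omega$ and not relative to $\Omega_N$. Everything else is routine subadditivity.
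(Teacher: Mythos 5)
Your proof is correct and follows essentially the same route as the paper: decompose $\Omega_N = \Omega \sqcup (\Omega_N\setminus\Omega)$ (resp.\ $\Omega_M = \Omega_N \sqcup (\Omega_M\setminus\Omega_N)$), intersect with $c$, remove $E_k$, and bound the error piece by $\lambda(\Omega_N\setminus\Omega)\leq\omega_N$. Your explicit remark that $\Omega_M\setminus\Omega_N\subseteq\Omega_M\setminus\Omega$ so that $\lambda(\Omega_M\setminus\Omega_N)\leq\omega_M$ is exactly the detail the paper leaves implicit when it says the second inequality ``follows using the same argument,'' and it is good that you made it explicit.
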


\begin{proof}
\begin{align*}
(\Omega_N \setminus E_k) \cap c &= ( (\Omega \sqcup(\Omega_N \setminus \Omega)) \setminus E_k ) \cap c \\
&= ( \Omega \setminus E_k \sqcup (\Omega_N \setminus \Omega) \setminus E_k) \cap c \\
&= (\Omega \setminus E_k) \cap c \sqcup (\Omega_N \setminus \Omega) \setminus E_k \cap c.
\end{align*}
Consequently,
\begin{align*}
\lambda((\Omega \setminus E_k) \cap c) &= \lambda((\Omega_N \setminus E_k) \cap c) - \lambda( (\Omega_N \setminus \Omega) \setminus E_k \cap c) \\
&\geq \lambda((\Omega_N \setminus E_k) \cap c) - \lambda(\Omega_N \setminus \Omega)\\
&\geq \lambda((\Omega_N \setminus E_k) \cap c) - \omega_N.
\end{align*}
The second inequality follows using the same argument applied to $\Omega_M = \Omega_N \sqcup \Omega_M \setminus \Omega_N$.
\end{proof}

\section{Algorithm}
\label{Sec_algorithm}

%
%

Let $\beta > 0$ such that $1-\omega - \beta > 0$.

\subsection{First (binary) digit}

We choose $N_1$ and $k_1$ such that 
\begin{equation*}
\frac{1}{2}(1 - \omega - \beta) -  \omega_{N_1} - r_{k_1} \geq \frac{1}{4}(1 - \omega - \beta) > 0
\end{equation*}
This can be achieved for example with $N_1$ and $k_1$ such that $ \omega_{N_1} \leq \frac{1}{8}(1-\omega-\beta)$ and $r_{k_1} \leq \frac{1}{8}(1-\omega-\beta)$. Suitable values for $N_1$ and $k_1$ are computable using Propositions~\ref{prop_Omega} and~\ref{bads_are_small}.

We have 
\begin{align*}
\lambda((\Omega_{N_1} \setminus E_{k_1}) \cap [0,1/2)) \ +\  \lambda((\Omega_{N_1} \setminus E_{k_1}) \cap [1/2,1)) 
& = \lambda(\Omega_{N_1} \setminus E_{k_1}) \\
& \geq \lambda(\Omega_{N_1}) - \lambda(E_{k_1}) \\
& \geq 1 - \omega - \beta,
\end{align*}
which is $> 0$ by assumption on $\beta$. The last lower bound is independent of $N_1$ and $k_1$ because $\lambda(\Omega_N) \geq \lambda(\Omega)\geq 1- \omega$ for any $N$ and $\lambda(E_k) \leq \lambda(E) < \beta$ for any $k$.

Hence there is an interval $c_1 \in \{[0,1/2), [1/2,1)\}$ such that 
\begin{equation*}
\lambda((\Omega_{N_1} \setminus E_{k_1}) \cap c_1) \geq \frac{1}{2}(1 - \omega - \beta) > 0.
\end{equation*}
Since the Lebesgue measure of $(\Omega_{N_1} \setminus E_{k_1}) \cap c_1$ can be computed, the interval $c_1$ can be computably obtained.

We have
\begin{align*}
\lambda((\Omega \setminus E) \cap c_1) & \geq \lambda((\Omega \setminus E_{k_1}) \cap c_1) - r_{k_1} \\
&\geq \lambda((\Omega_{N_1} \setminus E_{k_1}) \cap c_1) -  \omega_{N_1} - r_{k_1}\\
& \geq \frac{1}{2}(1 - \omega - \beta) -  \omega_{N_1} - r_{k_1}.
\end{align*}
Hence $\lambda((\Omega \setminus E) \cap c_1) > 0$, so there are numbers in $\Omega\cap c_1$ outside $E$, i.e. whose first binary digit is given by $c_1$.

\subsection{Second digit}

Let $N_2$ and $k_2$ be such that $\epsilon_{N_2} \leq \frac{1}{32}(1-\omega-\beta)$ and $r_{k_2} \leq \frac{1}{32}(1-\omega-\beta)$.

We have
\begin{align*}
\lambda((\Omega_{N_2} \setminus E_{k_2}) \cap c_2^1) 
+ \lambda((\Omega_{N_2} \setminus E_{k_2}) \cap c_2^2)
&= \lambda((\Omega_{N_2} \setminus E_{k_2}) \cap c_1) \\
&\geq \lambda((\Omega_{N_2} \setminus E_{k_1}) \cap c_1) - r_{k_1} \\
&\geq \lambda((\Omega_{N_1} \setminus E_{k_1}) \cap c_1) -\omega_{N_1} - r_{k_1} \\
&\geq \frac{1}{4}(1-\omega - \beta)\\
&> 0
\end{align*}
by the choice of $N_1$ and $k_1$ from step 1. Hence one half $c_2$ of $c_1$ satisfies
\begin{equation*}
\lambda((\Omega_{N_2} \setminus E_{k_2}) \cap c_2) \geq \frac{1}{8}(1-\omega-\beta) > 0.
\end{equation*}
Which half of $c_1$ to choose can be computed.

Finally, we have
\begin{align*}
\lambda((\Omega \setminus E) \cap c_2)
&\geq \lambda((\Omega \setminus E_{k_2}) \cap c_2) - r_{k_2} \\
&\geq \lambda((\Omega_{N_2} \setminus E_{k_2}) \cap c_2) - \omega_{N_2} - r_{k_2}\\
&\geq \frac{1}{8}(1-\omega - \beta) - \omega_{N_2} - r_{k_2} \\
&\geq \frac{1}{16}(1- \omega - \beta)\\
&> 0,
\end{align*}
hence there are numbers in $\Omega\cap c_2$ outside $E$, i.e. whose binary expansion starts with digits given by $c_1$, $c_2$.

This algorithm produces the binary digits of a real number $\nu$. 

\begin{Thm}
\label{BIGTHEOREM}
The number $\nu$ is computable. It is furthermore absolutely normal and continued fraction normal.
\end{Thm}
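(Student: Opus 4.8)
The plan is to establish the three asserted properties of $\nu$ — computability, absolute normality, and continued fraction normality — by exploiting the recursive construction and the measure bounds accumulated in the earlier sections.

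First I would address \emph{computability}. The algorithm in Section~\ref{Sec_algorithm} determines each binary digit of $\nu$ by choosing, at step $j$, one half $c_j$ of the previously selected dyadic interval $c_{j-1}$. The key point is that at each stage the selection depends only on comparing the two quantities $\lambda((\Omega_{N_j}\setminus E_{k_j})\cap c_j^1)$ and $\lambda((\Omega_{N_j}\setminus E_{k_j})\cap c_j^2)$, together with choosing admissible parameters $N_j$ and $k_j$. By Proposition~\ref{prop_Omega} and Proposition~\ref{bads_are_small}, suitable $N_j$ and $k_j$ can be found computably (one simply increases them until the bounds $\omega_{N_j}$ and $r_{k_j}$ drop below the prescribed thresholds, using the explicit estimates~\eqref{upper_bound_for_r_k}). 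Moreover $\Omega_{N}$ is a finite union of cylinder intervals with rational endpoints, and $E_k$ is likewise a finite union of intervals with rational endpoints whose measures are computable; hence every intersection with the dyadic interval $c_j$ and every resulting Lebesgue measure is a rational number obtainable by finitely many arithmetic operations and comparisons. This is exactly the naive notion of computability defined in the introduction, so $\nu$ is computable.

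Next I would prove that $\lambda((\Omega\setminus E)\cap c_j)>0$ for every $j$, and that the nested intervals $c_j$ shrink to a single point, which must be $\nu$. The lower bounds carried through the inductive scheme give, after the general step, an estimate of the form $\lambda((\Omega\setminus E)\cap c_j)\geq 2^{-(j+1)}(1-\omega-\beta)>0$; the essential structural fact is that at each step the ``loss'' incurred by passing from $(\Omega_{N_j}\setminus E_{k_j})$ to $(\Omega\setminus E)$ is controlled by $\omega_{N_j}+r_{k_j}$, which the parameter choices force to be at most half the current mass. Since $\lambda(c_j)=2^{-j}$, the intervals $c_j$ are nested with lengths tending to $0$, so $\bigcap_j c_j=\{\nu\}$; and because $(\Omega\setminus E)\cap c_j$ is nonempty for all $j$ while $\Omega\setminus E$ is closed under the relevant limiting operations along the cylinder structure, one concludes $\nu\in\Omega\setminus E$. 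This membership is the crux of the whole argument.

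Finally, \emph{absolute normality and continued fraction normality of $\nu$} follow from $\nu\in\Omega\setminus E$ together with the definition of $E$. Because $\nu\notin E$, for every base $b\geq 2$, every $\epsilon=1/m$, and every threshold $N\geq N_b(m)$ the number $\nu$ lies in $\tilde E_b(1/m,N)$, i.e.\ its first $N$ base-$b$ digits form an $(\epsilon,1)$-normal word; letting $N\to\infty$ and then $m\to\infty$ yields simple normality to base $b$ for all $b$, and the tilde-enlargement of the exceptional intervals guarantees this is unaffected by boundary phenomena such as expansions ending in repeated digits. The same reasoning applied to the continued fraction part of $E$ shows that for every finite string ${\bm d}$ and every $\epsilon=1/m$ the frequency $N({\bm d},\cdot)/(N-k+1)$ converges to $\mu_G(\Delta_{\bm d})$, which is precisely cylinder-set normality~\eqref{cf_norm_def} with respect to the Gauss map. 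The hard part throughout is verifying that the finitary sets $\Omega_N$ and $E_k$, whose use is forced by computability, approximate $\Omega$ and $E$ closely enough that the strict positivity of $\lambda((\Omega\setminus E)\cap c_j)$ survives every step; this is exactly what the set-theoretic lemmas of Section~\ref{sec_set_theoretic_lemmas} and the explicit decay $r_k=O_\beta(1/k)$ are designed to secure.
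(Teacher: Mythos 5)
Your overall skeleton (computability from the computability of all the finitary sets and their measures; normality from the positivity of $\lambda((\Omega\setminus E)\cap c_j)$ for every $j$ together with the shrinking nested intervals $c_j$) matches the paper's intent, and the computability part is fine. But the crucial step is where your argument has a genuine gap: you deduce $\nu\in\Omega\setminus E$ from the fact that every $c_j$ meets $\Omega\setminus E$ by appealing to ``$\Omega\setminus E$ is closed under the relevant limiting operations along the cylinder structure.'' This is not justified and is false as stated. The set $E$ is a countable union of half-open $b$-adic and continued-fraction cylinder intervals, so $[0,1)\setminus E$ is not closed; a point that is a limit of points of $\Omega\setminus E$ taken from the shrinking intervals $c_j$ can perfectly well lie in $E$ (for instance at the left endpoint of one of the constituent intervals of $E$). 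So ``$(\Omega\setminus E)\cap c_j\neq\emptyset$ for all $j$'' does not by itself yield $\nu\notin E$, and it certainly does not yield $\nu\in\Omega$ --- a claim the construction never establishes and the theorem does not need.

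The paper closes this gap by a different, purely interval-geometric contradiction, and this is where the tilde-enlargement actually earns its keep (you invoke it, but in the wrong place, namely when passing from $\nu\notin E$ to normality). Suppose $\nu$ were not absolutely normal or not continued fraction normal. Then for some parameters $\nu$ lies in an actual bad set $E^c_b(1/m,N)$ or $E^c_{\mathrm{CF}}(1/m,{\bm d},N)$, i.e.\ in the \emph{middle} interval of a tripled interval $I\subset\tilde E^c(\cdots)\subset E$; hence $\nu$ lies in the interior of an interval $I$ of positive length contained in $E$. Since $\lambda(c_j)=2^{-j}\to 0$ and $\nu\in c_j$ for all $j$, some $c_j$ is entirely contained in $I$, whence $(\Omega\setminus E)\cap c_j=\emptyset$ --- contradicting the inequality $\lambda((\Omega\setminus E)\cap c_j)>0$ secured at step $j$ of the algorithm. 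This argument never needs $\nu\in\Omega\setminus E$, only that any failure of normality would force a whole interval around $\nu$ into $E$. You should replace your closedness claim with this contradiction; the rest of your write-up (the parameter choices keeping $\omega_{N_j}+r_{k_j}$ below half the current mass, and the deduction of normality from the structure of $E$) then goes through.
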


\begin{proof}
All values $N_i$, $k_i$, $\omega_{N_i}$, $r_{k_i}$ and all appearing measures can be computed, hence $\nu$ is computable.

Suppose $\nu$ was not absolutely normal and continued fraction normal. Then $\nu$ is an element of $E$, i.e. $\nu$ is contained in an interval $I \in E$ of positive measure. Since $\nu$ by construction lies in all $c_i$, for some $i$ we have $c_i \subset I$, hence $c_i \subset E$. This implies that $(\Omega \setminus E) \cap c_i = \emptyset$, a contradiction since we chose $c_i$ to be such that $\lambda((\Omega \setminus E) \cap c_i) > 0$.
\end{proof}

Note that we implicitly used that absolute normality is equivalent to simple normality to all bases $b\geq 2$ (see e.g.~\cite[Ch.~4]{bugeaud2012distribution}).

\medskip
\textbf{Acknowledgements.}
The author was supported by the Austrian Science Fund (FWF): I 1751-N26; W1230, Doctoral Program ``Discrete Mathematics''; and  SFB F 5510-N26. He would like to thank the Erwin-Schr\"odinger-Institute in Vienna for its hospitality.



\begin{bibdiv}
\begin{biblist}

\bib{ADLER198195}{article}{
title = {A construction of a normal number for the continued fraction transformation},
journal = {Journal of Number Theory},
volume = {13},
number = {1},
pages = {95 - 105},
year = {1981},
issn = {0022-314X},
doi = {http://dx.doi.org/10.1016/0022-314X(81)90031-7},
author = {Adler, Roy},  
author={Keane, Michael},
author={Smorodinsky, Meir}
}

\bib{alvarez_becher2015:levin}{article}{
	AUTHOR = {Alvarez, Nicol{\'a}s},	
	AUTHOR = {Becher, Ver{\'o}nica},
	TITLE = {M. Levin's construction of absolutely normal numbers with very low discrepancy},
	JOURNAL = {arXiv:1510.02004},
	URL = {http://arxiv.org/abs/1510.02004},
}

%
%
\bib{becher_figueira:2002}{article}{
	AUTHOR = {Becher, Ver{\'o}nica},
    AUTHOR = {Figueira, Santiago},
    TITLE  = {An example of a computable absolutely normal number},
    JOURNAL = {Theoretical Computer Science},
    VOLUME = {270},
    YEAR = {2002},
    PAGES = {126--138},
}
\bib{becher_figueira_picchi2007:turing_unpublished}{article}{
	AUTHOR = {Becher, Ver{\'o}nica},
    AUTHOR = {Figueira, Santiago},
    AUTHOR = {Picchi, Rafael},
     TITLE = {Turing's unpublished algorithm for normal numbers},
   JOURNAL = {Theoret. Comput. Sci.},
  FJOURNAL = {Theoretical Computer Science},
    VOLUME = {377},
      YEAR = {2007},
    NUMBER = {1-3},
     PAGES = {126--138},
      ISSN = {0304-3975},
     CODEN = {TCSDI},
   MRCLASS = {03D80 (01A60 03-03 11K16 11Y16 68Q30)},
  MRNUMBER = {2323391 (2008j:03064)},
MRREVIEWER = {George Barmpalias},
       DOI = {10.1016/j.tcs.2007.02.022},
       URL = {http://dx.doi.org/10.1016/j.tcs.2007.02.022},
}
\bib{becher_heiber_slaman2013:polynomial_time_algorithm}{article}{
      author={Becher, Ver{\'o}nica},
      author={Heiber, Pablo~Ariel},
      author={Slaman, Theodore~A.},
       title={A polynomial-time algorithm for computing absolutely normal
  numbers},
        date={2013},
        ISSN={0890-5401},
     journal={Inform. and Comput.},
      volume={232},
       pages={1\ndash 9},
         url={http://dx.doi.org/10.1016/j.ic.2013.08.013},
      review={\MR{3132518}},
}
\bib{besicovitch1935:epsilon}{article}{
author={Besicovitch, A. S.},
title={The asymptotic distribution of the numerals in the decimal representation of the squares of the natural numbers},
year={1935},
pages={146–156},
journal={Math. Zeit.},
number={39},
}

\bib{borel1909}{article}{
year={1909},
issn={0009-725X},
journal={Rendiconti del Circolo Matematico di Palermo},
volume={27},
number={1},
doi={10.1007/BF03019651},
title={Les probabilit\'{e}s d\'{e}nombrables et leurs applications arithm\'{e}tiques},
url={http://dx.doi.org/10.1007/BF03019651},
publisher={Springer-Verlag},
author={Borel, \'{E}mile},
pages={247-271},
language={French}
}
\bib{bradley}{article}{
author = {Bradley, Richard C.},
doi = {10.1214/154957805100000104},
fjournal = {Probability Surveys},
journal = {Probab. Surveys},
pages = {107--144},
publisher = {The Institute of Mathematical Statistics and the Bernoulli Society},
title = {Basic Properties of Strong Mixing Conditions. A Survey and Some Open Questions},
url = {http://dx.doi.org/10.1214/154957805100000104},
volume = {2},
year = {2005}
}

%
\bib{bugeaud2012distribution}{book}{
  title={Distribution Modulo One and Diophantine Approximation},
  author={Bugeaud, Y.},
  isbn={9780521111690},
  lccn={2012013417},
  series={Cambridge Tracts in Mathematics},
  year={2012},
  publisher={Cambridge University Press},
}
%
%

%
%
\bib{drmota_tichy}{book}{
    AUTHOR = {Drmota, Michael},
    AUTHOR = {Tichy, Robert F.},
     TITLE = {Sequences, discrepancies and applications},
    SERIES = {Lecture Notes in Mathematics},
    VOLUME = {1651},
 PUBLISHER = {Springer-Verlag, Berlin},
      YEAR = {1997},
     PAGES = {xiv+503},
      ISBN = {3-540-62606-9},
   MRCLASS = {11Kxx (11K06 11K38)},
  MRNUMBER = {1470456 (98j:11057)},
MRREVIEWER = {Oto Strauch},
}

\bib{figueira_nies}{article}{
author={S. Figueira},
author={A. Nies},
title={Feasible analysis and randomness},
journal={Manuscript},
year={2013}
}

\bib{iosifescu_kraaikamp}{book}{
author={M. Iosifescu},
author={Kraaikamp, Cornelis},
title={Metrical Theory of Continued Fractions},
year={2002},
publisher={Springer Netherlands},
note={Mathematics and Its Applications}
}

\bib{kessebohmer_schindler}{article}{
title={Limit theorems for counting large continued fraction digits},
author={Kesseb\"ohmer, Marc},
author={Schindler, Tanja},
journal={arXiv:1604.06612}
}

\bib{kuipers_niederreiter}{book}{
    AUTHOR = {Kuipers, L.},
    AUTHOR = {Niederreiter, H.},
     TITLE = {Uniform distribution of sequences},
      NOTE = {Pure and Applied Mathematics},
 PUBLISHER = {Wiley-Interscience [John Wiley \& Sons], New
              York-London-Sydney},
      YEAR = {1974},
     PAGES = {xiv+390},
   MRCLASS = {10K05 (22D99)},
  MRNUMBER = {0419394 (54 \#7415)},
MRREVIEWER = {P. Gerl},
}

\bib{MR1504765}{article}{
   author={Lebesgue, H.},
   title={Sur certaines d\'emonstrations d'existence},
   language={French},
   journal={Bull. Soc. Math. France},
   volume={45},
   date={1917},
   pages={132--144},
   issn={0037-9484},
   review={\MR{1504765}},
}

\bib{levin1979:absolutely_normal}{article}{
    AUTHOR = {Levin, M. B.},
     TITLE = {Absolutely normal numbers},
   JOURNAL = {Vestnik Moskov. Univ. Ser. I Mat. Mekh.},
  FJOURNAL = {Vestnik Moskovskogo Universiteta. Seriya I. Matematika,
              Mekhanika},
      YEAR = {1979},
    NUMBER = {1},
     PAGES = {31--37, 87},
      ISSN = {0201-7385},
   MRCLASS = {10K25},
  MRNUMBER = {525299 (80d:10076)},
MRREVIEWER = {J. Galambos},
}

\bib{lutz_mayordomo_nearly_linear_time}{article}{
	AUTHOR = {Lutz, Jack H.},	
	AUTHOR = {Mayordomo, Elvira},
	TITLE = {Computing Absolutely Normal Numbers in Nearly Linear Time},
	JOURNAL = {arXiv:1611.05911},
	URL = {http://arxiv.org/abs/1611.05911},
}

%

\bib{Madritsch2016}{article}{
author={Madritsch, Manfred G.},
author={Mance, Bill},
title={Construction of $\mu$-normal sequences},
journal={Monatshefte f{\"u}r Mathematik},
year={2016},
volume={179},
number={2},
pages={259--280},
issn={1436-5081},
doi={10.1007/s00605-015-0837-1},
url={http://dx.doi.org/10.1007/s00605-015-0837-1}
}

\bib{mayordomo}{article}{
author={E. Mayordomo},
title={Construction of an absolutely normal real number in polynomial},
journal={Manuscript},
year={2013}
}

\bib{merlevede_peligrad_rio}{collection}{
author = {Merlev\`{e}de, Florence},
author={Peligrad, Magda},
author={Rio, Emmanuel},
edition = {High Dimensional Probability V: The Luminy Volume, 273--292},
doi = {10.1214/09-IMSCOLL518},
series={IMS Collections},
title = {Bernstein inequality and moderate deviations under strong mixing conditions},
volume = {5},
year = {2009},
}

\bib{Philipp1988}{article}{
author={Philipp, Walter},
title={Limit theorems for sums of partial quotients of continued fractions},
journal={Monatshefte f{\"u}r Mathematik},
year={1988},
volume={105},
number={3},
pages={195--206},
issn={1436-5081},
doi={10.1007/BF01636928}
}

\bib{MR0101857}{article}{
   author={Postnikov, A. G.},
   author={Pyatecki{\u\i}, I. I.},
   title={A Markov-sequence of symbols and a normal continued fraction},
   language={Russian},
   journal={Izv. Akad. Nauk SSSR. Ser. Mat.},
   volume={21},
   date={1957},
   pages={729--746},
   issn={0373-2436},
   review={\MR{0101857}},
}

\bib{queffelec}{collection}{
author = {Queff\'elec, Martine},
edition = {Dynamics \& Stochastics, 225--236},
doi = {10.1214/074921706000000248},
series = {IMS Lecture Notes--Monograph Series},
title = {Old and new results on normality},
volume = {48},
year = {2006}
}

\bib{scheerer2015:schmidt}{article}{
author = {{Scheerer}, A.-M.},
    title = {Computable Absolutely Normal Numbers and Discrepancies},
     year = {2015},   
     JOURNAL = {arXiv:1511.03582},
	URL = {http://arxiv.org/abs/1511.03582},
}
%

%

%
\bib{schmidt1961:uber_die_normalitat}{article}{
    AUTHOR = {Schmidt, Wolfgang M.},
     TITLE = {\"{U}ber die {N}ormalit\"at von {Z}ahlen zu verschiedenen
              {B}asen},
   JOURNAL = {Acta Arith.},
  FJOURNAL = {Polska Akademia Nauk. Instytut Matematyczny. Acta Arithmetica},
    VOLUME = {7},
      YEAR = {1961/1962},
     PAGES = {299--309},
      ISSN = {0065-1036},
   MRCLASS = {10.33},
  MRNUMBER = {0140482 (25 \#3902)},
MRREVIEWER = {N. G. de Bruijn},
}
%

%
\bib{siegel:towards_a_usable_theory_of_chernoff}{article}{
    author = {Alan Siegel},
    title = {Toward a usable theory of Chernoff Bounds for heterogeneous and partially dependent random variables},
    year = {1992}
}
\bib{sierpinski1917:borel_elementaire}{article}{
	    AUTHOR = {Sierpinski, Waclaw},
     TITLE = {D\'emonstration \'el\'ementaire du th\'eor\`eme de M. Borel sur les nombres absolument normaux et d\'etermination effective d'un tel nombre},
   JOURNAL = {Bulletin de la Soci\'et\'e Math\'ematique de France},
    VOLUME = {45},
      YEAR = {1917},
     PAGES = {127--132},
}
\bib{turing1992:collected_works}{article}{
    AUTHOR = {Turing, Alan},
     TITLE = {A Note on Normal Numbers},
     BOOKTITLE = {Collected Works of A. M. Turing, Pure Mathematics, edited by J. L. Britton},
   PUBLISHER = {North Holland},
      YEAR = {1992},
     PAGES = {117--119},
}
%

\bib{Vandehey2016424}{article}{
title = {New normality constructions for continued fraction expansions},
journal = {Journal of Number Theory},
volume = {166},
number = {},
pages = {424 - 451},
year = {2016},
issn = {0022-314X},
doi = {http://dx.doi.org/10.1016/j.jnt.2016.01.030},
author = {Vandehey, Joseph}
}

\end{biblist}
\end{bibdiv}

\end{document}